\documentclass[final,leqno]{siamltex}
\usepackage{epsfig}
\usepackage{amsmath}
\usepackage{amssymb}
\usepackage{tikz}
\usepackage{graphicx}
\usepackage[notcite,notref]{showkeys}
\newtheorem{algorithm}{Weak Galerkin Algorithm}
\newcommand{\bq}{{\bf q}}
\newcommand{\bn}{{\bf n}}
\newcommand{\bx}{{\bf x}}
\newcommand{\bv}{{\bf v}}
\newcommand{\bw}{{\bf w}}

\def\T{{\mathcal T}}
\def\E{{\mathcal E}}
\def\Q{{\mathbb Q}}

\def\l{{\langle}}
\def\r{{\rangle}}

\def\bn{{\bf n}}
\def\bq{{\bf q}}

\def\bbQ{\mathbb{Q}}
\newcommand{\pT}{{\partial T}}

\def\3bar{{|\hspace{-.02in}|\hspace{-.02in}|}}
%\setlength{\textwidth}{6truein} \setlength{\textheight}{8truein}
%\voffset=-0.55truein
%\hoffset=-0.65truein

\setlength{\parskip}{1\parskip}

  \def\b#1{\mathbf{#1}} 
\def\a#1{\begin{align*}#1\end{align*}} \def\an#1{\begin{align}#1\end{align}} 
 \def\div{\operatorname{div}}
\def\p#1{\begin{pmatrix}#1\end{pmatrix}}

\title{A stabilizer free weak Galerkin finite element
   method on polytopal mesh: Part II}

\author{Xiu Ye\thanks{Department of
Mathematics, University of Arkansas at Little Rock, Little Rock, AR
72204 (xxye@ualr.edu). This research was supported in part by
National Science Foundation Grant DMS-1620016.}
\and
Shangyou Zhang\thanks{Department of
Mathematical Sciences, University of Delaware, Newark, DE 19716 (szhang@udel.edu).}
}

\begin{document}

\maketitle

\begin{abstract}
A stabilizer free weak Galerkin (WG) finite element method on polytopal mesh has been introduced in Part I of this paper (J. Comput. Appl. Math, 371 (2020) 112699. arXiv:1906.06634.)
Removing  stabilizers from discontinuous  finite element methods  simplifies
formulations and reduces programming complexity. The purpose of this paper is to introduce a new WG method without stabilizers on polytopal mesh that has convergence rates one order higher than optimal convergence rates.
This method is the first WG method that achieves superconvergence on polytopal mesh. Numerical examples in 2D and 3D are presented verifying the theorem.
\end{abstract}

\begin{keywords}
weak Galerkin finite element methods, second-order
elliptic problems, polytopal meshes
\end{keywords}

\begin{AMS}
Primary: 65N15, 65N30; Secondary: 35J50
\end{AMS}
\pagestyle{myheadings}

\section{Introduction}\label{Section:Introduction}

A stabilizing/penalty term  is often used in finite element methods
with discontinuous approximations  to enforce  connection of discontinuous functions
across element boundaries. Removing  stabilizers from discontinuous  finite element method is desirable since it  simplifies
formulation and reduces programming complexity. A stabilizer free weak Galerkin finite element has been developed in \cite{yz-sf-wg} for the following model problem: seek an unknown function $u$ satisfying
\begin{eqnarray}
-\Delta u&=&f\quad \mbox{in}\;\Omega,\label{pde}\\
u&=&0\quad\mbox{on}\;\partial\Omega,\label{bc}
\end{eqnarray}
where $\Omega$ is a polytopal domain in $\mathbb{R}^d$.
The WG method developed in \cite{yz-sf-wg} has the following simple formulation without any stabilizers:
\begin{equation}\label{dfe}
(\nabla_w u_h,\nabla_w v)=(f,v),
\end{equation}
where  $\nabla_w$ is weak gradient. Remove of stabilizing terms from the WG finite element methods is challenging, specially on polytopal mesh.
Construction of  spaces to approximate $\nabla_w$  is the key of maintaining ultra simple formulation (\ref{dfe}). The main idea in \cite{yz-sf-wg} is to raise the degree of polynomials used to compute weak gradient $\nabla_w$. In \cite{yz-sf-wg,cdg2}, gradient is approximated by a polynomial of order $j=k+n-1$ with $n$ the number of sides of  polygonal element.
This result has been improved in \cite{aw,aw1} by reducing the degree of polynomial $j$. In \cite{liu, mu}, Wachspress coordinates \cite{cw} are used to approximate $\nabla_w$, which are usually rational functions, instead of
   polynomials.

In this paper, we introduce a new stabilizer free WG finite element method on polytopal mesh. This method is the first WG method that achieves superconvergence on polytopal mesh.
In this method we use piecewise low order polynomials on a polygonal element to approximate $\nabla_w$ instead of using one piece high order polynomial in \cite{yz-sf-wg}. While the stabilizer free WG method in \cite{yz-sf-wg} has optimal convergence rates, our new WG method improves the convergence rate from optimality by order one in both an energy norm and the $L^2$ norm. Superconvergence results of the WG methods have been investigated in \cite{wy} on simplicial mesh. This method is the first WG method that achieves superconvergence on polytopal mesh, which has been verified theoretically and computationally. Extensive numerical examples are tested for the new WG elements of different degrees in two and three dimensional spaces.

\section{Weak Galerkin Finite Element Schemes}\label{Section:wg-fem}

Let ${\cal T}_h$ be a partition of the domain $\Omega$ consisting of
polygons in two dimension or polyhedra in three dimension satisfying
a set of conditions specified in \cite{wymix}. Denote by ${\cal E}_h$
the set of all edges or flat faces in ${\cal T}_h$, and let ${\cal
E}_h^0={\cal E}_h\backslash\partial\Omega$ be the set of all
interior edges or flat faces. For every element $T\in \T_h$, we
denote by $h_T$ its diameter and mesh size $h=\max_{T\in\T_h} h_T$
for ${\cal T}_h$.

For a given integer $k \ge 0$, let $V_h$ be the weak Galerkin finite
element space associated with $\T_h$ defined as follows
\begin{equation}\label{vhspace}
V_h=\{v=\{v_0,v_b\}:\; v_0|_T\in P_k(T),\ v_b|_e\in P_{k}(e),\ e\subset\pT,  T\in \T_h\}
\end{equation}
and its subspace $V_h^0$ is defined as
\begin{equation}\label{vh0space}
V^0_h=\{v: \ v\in V_h,\  v_b=0 \mbox{ on } \partial\Omega\}.
\end{equation}
We would like to emphasize that any function $v\in V_h$ has a single
value $v_b$ on each edge $e\in\E_h$.

For given $T\in\T_h$ and $v=\{v_0,v_b\}\in V_h$, a weak gradient $\nabla_wv$ is a piecewise polynomial satisfying  $\nabla_w v|_T \in \Lambda_k(T)$ and
\begin{equation}\label{d-d}
  (\nabla_w v, \bq)_T = -(v_0, \nabla\cdot \bq)_T+ \langle v_b, \bq\cdot\bn\rangle_{\partial T}\qquad
   \forall \bq\in \Lambda_k(T),
\end{equation}
where $\Lambda_k(T)$ will be defined in the next section.

For simplicity, we adopt the following notations,
\begin{eqnarray*}
(v,w)_{\T_h} &=&\sum_{T\in\T_h}(v,w)_T=\sum_{T\in\T_h}\int_T vw d\bx,\\
 \l v,w\r_{\partial\T_h}&=&\sum_{T\in\T_h} \l v,w\r_\pT=\sum_{T\in\T_h} \int_\pT vw ds.
\end{eqnarray*}

\begin{algorithm}
A numerical approximation for (\ref{pde})-(\ref{bc}) can be
obtained by seeking $u_h=\{u_0,u_b\}\in V_h^0$
satisfying  the following equation:
\begin{equation}\label{wg}
(\nabla_wu_h,\nabla_wv)_{\T_h}=(f,\; v_0) \quad\forall v=\{v_0,v_b\}\in V_h^0.
\end{equation}
\end{algorithm}

\section{Construction of $\Lambda_k(T)$ and Its Properties}
The space $H(\div;\Omega)$ is defined as the set of vector-valued functions on $\Omega$ which,
together with their divergence, are square integrable; i.e.,
\[
H(\div; \Omega)=\left\{ \bv\in [L^2(\Omega)]^d:\; \nabla\cdot\bv \in L^2(\Omega)\right\}.
\]
We start this section by defining $\Lambda_h(T)$. For any $T\in\T_h$, it can be divided in to a set of disjoint triangles $T_i$ with $T=\cup T_i$.  Then $\Lambda_h(T)$ can be defined as
\begin{eqnarray}
\Lambda_k(T)=\{\bv\in H(\div,T):&&\ \bv|_{T_i}\in RT_k(T),\;\;\nabla\cdot\bv\in P_k(T),\label{lambda}\\
&&\bv\cdot\bn|_e\in P_k(e),\;e\subset\pT\},\nonumber
\end{eqnarray}
where $RT_k(T)$ is the usual Raviart-Thomas element \cite{bf} of order $k$.

\begin{lemma}
Let $\Pi_h : H(\div,\Omega) \to H(\div,\Omega) \cap \otimes \Lambda_k(T)$ be defined in
 \eqref{p-j} below.  For $\bv \in H(\div,\Omega)$ and for all $T\in\mathcal T_h$,
   we have,
\begin{eqnarray}
(\Pi_h\bv,\;\bw)_T&=&(\bv,\;\bw)_T  \quad \forall \bw\in [P_{k-1}(T)]^d,\label{P1}\\
\l \Pi_h\bv\cdot\bn,\; q\r_e &=&\l \bv\cdot\bn,\; q\r_e\quad\forall q\in P_{k}(e),
     e\subset\pT,\label{P2} \\
(\nabla\cdot\bv,\;q)_T&=&(\nabla\cdot\Pi_h\bv,\;q)_T \quad\forall q\in P_k(T), \label{key2}\\
-(\nabla\cdot\bv, \;v_0)_{\T_h}&=&(\Pi_h\bv, \;\nabla_w v)_{\T_h}
    \quad \forall v=\{v_0,v_b\}\in V_h^0, \label{key1}\\
%-(\nabla\cdot\tau, \;v_0)_{\T_h}&=&(\Pi_h\tau, \;\nabla_w v)_{\T_h}\label{key1}\\
\|\Pi_h\bv-\bv\|&\le& Ch^{k+1}|\bv|_{k+1}.\label{key3}
\end{eqnarray}
\end{lemma}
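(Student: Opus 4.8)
The plan is to define $\Pi_h\bv$ element-by-element via the natural degrees of freedom of the space $\Lambda_k(T)$, and then verify the five listed identities one at a time, with the commuting-diagram property \eqref{key1} being the genuine payoff and \eqref{key3} following from a standard scaling argument.

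First I would set up the local projection. On each $T\in\T_h$ with its triangulation $T=\cup T_i$, recall that $\Lambda_k(T)\subset H(\div,T)$ consists of piecewise Raviart--Thomas functions of order $k$ whose normal trace on $\pT$ lies in $P_k(e)$ and whose divergence lies in $P_k(T)$. I would define $\Pi_h\bv|_T$ to be the element of $\Lambda_k(T)$ determined by the conditions \eqref{P1} and \eqref{P2}; that is, by prescribing the moments $(\Pi_h\bv,\bw)_T=(\bv,\bw)_T$ for $\bw\in[P_{k-1}(T)]^d$ together with the normal moments $\l\Pi_h\bv\cdot\bn,q\r_e=\l\bv\cdot\bn,q\r_e$ for $q\in P_k(e)$ on each $e\subset\pT$. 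The first task is to check these constitute a unisolvent set of degrees of freedom on $\Lambda_k(T)$, so that $\Pi_h$ is well defined; this is where the specific design of $\Lambda_k(T)$ matters and where I would expect to invoke or adapt the unisolvence analysis of the underlying composite Raviart--Thomas space. Since neighboring elements share the same normal trace data on a common face (the moments in \eqref{P2} depend only on $\bv\cdot\bn|_e$, which is single-valued), the assembled $\Pi_h\bv$ indeed lies in $H(\div,\Omega)\cap\otimes\Lambda_k(T)$, and \eqref{P1}--\eqref{P2} hold by construction.

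Next I would derive \eqref{key2}. For $q\in P_k(T)$, integrate by parts twice: $(\nabla\cdot\Pi_h\bv,q)_T=-(\Pi_h\bv,\nabla q)_T+\l\Pi_h\bv\cdot\bn,q\r_{\pT}$. Because $\nabla q\in[P_{k-1}(T)]^d$, property \eqref{P1} converts the volume term to $-(\bv,\nabla q)_T$; because $q|_e\in P_k(e)$, property \eqref{P2} converts the boundary term to $\l\bv\cdot\bn,q\r_{\pT}$; integrating back by parts gives $(\nabla\cdot\bv,q)_T$. Then \eqref{key1} is the main step and follows by summing the definition \eqref{d-d} of the weak gradient against $\bq=\Pi_h\bv|_T\in\Lambda_k(T)$: for $v=\{v_0,v_b\}\in V_h^0$,
\[
(\Pi_h\bv,\nabla_w v)_{\T_h}=\sum_{T}\Big(-(v_0,\nabla\cdot\Pi_h\bv)_T+\l v_b,\Pi_h\bv\cdot\bn\r_{\pT}\Big).
\]
Using \eqref{key2} with $q=v_0\in P_k(T)$ on the first term, and \eqref{P2} with $q=v_b\in P_k(e)$ on the second term, this equals $\sum_T\big(-(v_0,\nabla\cdot\bv)_T+\l v_b,\bv\cdot\bn\r_{\pT}\big)$. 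Finally, since $\bv\in H(\div,\Omega)$ has a single-valued normal trace and $v_b=0$ on $\partial\Omega$, the boundary terms cancel across interior faces (and vanish on $\partial\Omega$), leaving $-(\nabla\cdot\bv,v_0)_{\T_h}$, which is \eqref{key1}.

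For the approximation estimate \eqref{key3}, I would use a Bramble--Hilbert / scaling argument on the reference configuration: $\Pi_h$ reproduces $[P_k(T)]^d$ (this is a consequence of \eqref{P1}--\eqref{P2} and unisolvence, since any $\bw\in[P_k(T)]^d\subset\Lambda_k(T)$ has the same degrees of freedom as its image), the degrees of freedom are bounded on $H(\div)$ after scaling, and the shape-regularity assumptions on $\T_h$ from \cite{wymix} (in particular that each $T$ admits a sub-triangulation with triangles comparable to $h_T$) give uniform constants, yielding $\|\Pi_h\bv-\bv\|_T\le Ch_T^{k+1}|\bv|_{k+1,T}$; summing over $T$ gives \eqref{key3}. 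The main obstacle I anticipate is the first one — establishing unisolvence of the chosen degrees of freedom on the nonstandard composite space $\Lambda_k(T)$ and the accompanying scaling/trace bounds on a general polytope — since once $\Pi_h$ is well defined, identities \eqref{key2}, \eqref{key1} are routine integration by parts and \eqref{key3} is a textbook Bramble--Hilbert argument.
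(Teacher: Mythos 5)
Your handling of \eqref{key2}, \eqref{key1} and \eqref{key3} is essentially the paper's (integration by parts on each $T$ using \eqref{P1}--\eqref{P2} and the single-valuedness of $v_b$ and of the normal trace, then a polynomial-reproduction plus scaling/Bramble--Hilbert argument), but the heart of the lemma is the construction of $\Pi_h$, and there your proposal has a genuine gap: the moments in \eqref{P1}--\eqref{P2} are too few to determine an element of the composite space $\Lambda_k(T)$, so the interpolant you propose is not well defined and the unisolvence you defer cannot hold. A 2D count makes this concrete. For a polygon cut into $n$ triangles by $n-1$ interior edges (no interior vertices), the space of piecewise $RT_k$ fields with continuous normal components and one-piece divergence has dimension $n(k+1)(k+3)-(n-1)(k+1)-\tfrac{(n-1)(k+1)(k+2)}{2}$, while \eqref{P1}--\eqref{P2} supply only $k(k+1)+N(k+1)$ conditions, $N=n+2$ being the number of boundary edges; the deficit is $\tfrac{(n-1)k(k+1)}{2}=(n-1)\dim P_{k-1}(T)$, positive for every $k\ge 1$, $n\ge 2$. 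For example, for $k=1$ on a quadrilateral split into two triangles, $\dim\Lambda_1(T)=11$ but you prescribe only $10$ moments. The classical $RT_k$ degrees of freedom simply do not see the piecewise structure of $\Lambda_k(T)$; moreover, in 3D your data do not enforce that $\Pi_h\bv\cdot\bn$ is a single polynomial on a boundary face that has been subdivided into triangles, which is part of the definition of $\Lambda_k(T)$.

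The paper's proof is devoted precisely to this missing piece. It defines $\Pi_h$ through the square system \eqref{p-j}: boundary normal moments \eqref{p-j1}, interior moments of the $\bn_1$-component against $P_{k-1}(T)$ taken as one piece over the whole $T$ in \eqref{p-j2}, but interior moments of the $\bn_2$- and $\bn_3$-components against $P_{k-1}(T_i)$ on each subtetrahedron in \eqref{p-j3} and \eqref{p-j4}, together with explicit constraint equations \eqref{p-j5}, \eqref{p-j6} and \eqref{p-j7} forcing normal continuity, one-piece divergence and a one-piece normal trace on a subdivided boundary face. The dimension count then matches exactly, and the bulk of the argument is the kernel analysis: first $\nabla\cdot\Pi_h\bv=0$ on $T$, then a sweep through corner tetrahedra, exploiting that $\bn_1$ is not parallel to any interior face normal, to kill the components one at a time. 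In that construction \eqref{P1} and \eqref{P2} are consequences of \eqref{p-j2}--\eqref{p-j4} and of \eqref{p-j1}, \eqref{p-j7}, not the definition itself. Once such a well-defined $\Pi_h$ is available, your derivations of \eqref{key2}, \eqref{key1} and \eqref{key3} go through, but as written the proposal omits the essential construction and rests on a degree-of-freedom set that is provably not unisolvent.
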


\begin{proof}  A similar interpolation operator $\Pi_h$ is studied in \cite{Lin}
   which does not require one piece polynomial $\Pi_h \b v$ on one face of $T$, i.e.,
     \eqref{p-j7} is omitted below in the definition of $\Pi_h$.
   The proof here is very similar to the one in
    \cite{Lin}.

  We assume no additional inner vertex/edges is introduced in subdividing
   a polygon/polyhedron $T$ in to $n$ triangles/tetrahedrons $\{T_i\}$.
   That is,  we have precisely $n-1$ internal edges/triangles which separate $T$
    into $n$ parts. For simple notation,  only one face $e_1$ of $T$ is subdivided in to
   $m$ edges/triangles, $e_{1,1}, \dots, e_{1,m}$.  Note that in 3D a non-triangular
     polygonal face has to be subdivided in to
   several triangles.   We limit the proof to 3D.
    We need only omit the fourth equation \eqref{p-j4} in \eqref{p-j} to get
  a 2D proof.

  On $n$ tetrahedrons,  a function of $\Lambda_k$ can be expressed as
 \an{\label{v-e}  \bv_h|_{T_{i_0}} = \sum_{i+j+l\le k}  \p{a_{1,ijl}\\a_{2,ijl}\\a_{3,ijl}} x^i y^j z^l
                + \sum_{i+j+l= k} \p{x\\y\\z} a_{4,ijl} x^i y^j z^l, \
    i_0=1,...n.  }
$\bv_h|_{T_{i_0}}$ is determined by
  \a{ \frac{n(k+1)(k+2)(k+3)}2 + \frac{ n(k+1)(k+2)}2=\frac{n(k+1)(k+2)(k+4)}2 }
  coefficients.
For any $\bv\in H(\div;T)$, $\Pi_h \b v\in \Lambda_k(T)$ is defined by
\begin{subequations} \label{p-j}
\an{ \label{p-j1}   \int_{e_{ij}\subset \partial T} (\Pi_h \b v-\bv) \cdot \b n_{ij}
         p_k dS & = 0 \quad \forall p_k \in P_k(e_{ij}), e_{ij}\ne e_{1,\ell},\ell\ge 2, \\
   \label{p-j2}  \int_T (\Pi_h \b v -\bv )\cdot\bn_1 p_{k-1} d \b x &=0 \quad
                \forall p_{k-1} \in P_{k-1}(T), \\
    \label{p-j3}  \int_{T_i}  (\Pi_h \b v -\bv)\cdot\bn_2 p_{k-1} d \b x &=0 \quad
                \forall p_{k-1} \in P_{k-1}(T_i), \ i=1,...n, \\
    \label{p-j4}  \int_{T_i} ( \Pi_h \b v -\b v )\cdot\bn_3 p_{k-1} d \b x &=0 \quad
                \forall p_{k-1} \in P_{k-1}(T_i), \ i=1,...n, \\
    \label{p-j5}   \int_{e_{ij}\subset T^0} [\Pi_h \b v]\cdot \b n_{ij}p_k  dS &=0
               \quad \forall p_k \in P_k(e_{ij}), \\
    \label{p-j6}  \int_{T_1} \nabla \cdot ( \Pi_h \b v|_{T_i} -\Pi_h \b v|_{T_1} ) p_{k} d \b x &=0 \quad
                \forall p_{k} \in P_k(T_1), \ i=2,...,n, \\
    \label{p-j7}  \int_{e_{1,1}}  ( \Pi_h \b v|_{e_{1,i}} -\Pi_h \b v|_{e_{1,1}} )
         \cdot \b n p_{k} d S &=0 \quad
                \forall p_{k} \in P_k(e_{1,1}), \ i=2,...,m,
 } \end{subequations}
where $e_{ij}$ is the $j$-th face triangle of $T_i$ with a fixed normal vector $\bn_{ij}$,
     $\bn_1$ is a unit vector not parallel to any internal face normal $\bn_{ij}$,
    $(\bn_1,\bn_2,\bn_3)$ forms a right-hand orthonormal system,
      $[\cdot]$ denotes the jump on a face triangle,
    $\Pi_h\b v|_{T_i}$ is understood as a polynomial vector which can be used on
    another tetrahedron $T_1$,
     $e_{1,i}\subset e_1\subset \partial T$ is a face triangle of
      $T_{i_i}$, $\b n$ is a normal vector on $e_1$,
    and $\Pi_h \b v|_{e_{1,i}}$ is extended to the whole $e_1$ as
      one polynomial.
The linear system \eqref{p-j} of equations has the following number of equations,
\a{  &\quad \ ( n+3-m)\frac{(k+1)(k+2)}2 + (2n+1)\frac{k(k+1)(k+2)}6 \\
     &\quad \ +(n-1) \frac{(k+1)(k+2)}2 + (n-1) \frac{ (k+1)(k+2)(k+3)}6 \\
     &\quad \ +(m-1)  \frac{(k+1)(k+2)}2 \\
     &= \frac{n(k+1)(k+2)(k+4)}2,
  } which is exactly the number of coefficients for a $\bv_h$ function in \eqref{v-e}.
Thus we have a square linear system.
The system has a unique solution if and only if the kernel is $\{0\}$.

Let $\bv=0$ in \eqref{p-j}.
Though $\Pi_h\bv$ is a $P_{k+1}$ polynomial,  $\Pi_h\bv\cdot \b n_{ij}$ is a $P_k$ polynomial
   when restricted on $e_{ij}$.  This can be seen by the normal format of plane equation for
   triangle $e_{ij}$.
By the first equation \eqref{p-j1}, $\Pi_h\bv\cdot \b n_{ij}= 0$ on ${e_{ij}}$, $e_{ij}\ne e_{1,\ell}$, $\ell\ge 2$.  By the seventh equation \eqref{p-j7},
   $\Pi_h\bv|_{e_{1,\ell}}\cdot \b n =\Pi_h\bv|_{e_{1,1}}\cdot \b n = 0$, $\ell\ge 2$.
In other words, \eqref{p-j7} ensures $\Pi_h\bv\cdot \b n$ is a one-piece polynomial on $e_1$
   and \eqref{p-j1} enforces it to zero.
By the sixth equation \eqref{p-j6}, $\nabla \cdot \Pi_h\bv$ is a one-piece polynomial on the whole
   $T$.
Because $\nabla \cdot \Pi_h\bv$ is continuous on inner interface triangles and is a $P_{k}(e_{ij})$
   polynomial on all the outer face triangles, by the first five equations in \eqref{p-j},
    we have
\a{ \int_T (\nabla \cdot \Pi_h\bv)^2 d \b x &=\sum_{i=1}^n
      \Big(\int_{T_i} -\Pi_h\bv\cdot \nabla(\nabla \cdot  \Pi_h\bv) d\b x
          +\int_{\partial T_i} \Pi_h\bv\cdot \b n (\nabla \cdot \Pi_h\bv) dS \Big) \\
    &=\sum_{i=1}^n
      \sum_{j=1}^3 \int_{T_i} -(\Pi_h\bv\cdot \bn_j)(\bn_j\cdot \nabla(\nabla \cdot \Pi_h\bv)) d\b x  \\
   &=0.  }
That is,
\an{\label{div0} \nabla \cdot  \Pi_h\bv=0 \quad\text{ on } \ T.  }

Starting from a corner tetrahedron $T_1$, we have its three face triangles, $e_{11}$,
   $e_{12}$ and $e_{13}$, on the boundary of $T$.
The forth face triangle $e_{14}$ of $T_1$ is shared by $T_2$.
By the selection of $\bn_1$,  the normal vector $\bn_{14}=c_1\bn_1+c_2\bn_2+c_3\bn_3$ of
  $e_{14}$ has a non zero $c_1\ne 0$.
   a 2D polynomial $p_k\in P_k(e_{14})$ can be expressed as $p_k(x_2,x_3)$, where we use
   $(x_1,x_2,x_3)$ as the coordinate variables under the system $(\bn_1, \bn_2, \bn_3)$.
Viewing this polynomial as a 3D polynomial, i.e. extending it constantly in $x_1$-direction,
   we have \a{ p_k(x_1,x_2,x_3) = p_k(x_2,x_3), \quad (x_1,x_2,x_3)\in T_1.  }
By \eqref{div0} and the third and fourth equations of \eqref{p-j},  it follows that
  \an{\nonumber 0 &=\int_{T_1} ( \nabla \cdot \Pi_h\bv )  p_k d\b x \\
      \nonumber  &= -\int_{T_1} [(\Pi_h\bv\cdot \bn_1)\partial_{x_1} p_k +
                     (\Pi_h\bv\cdot \bn_2)\partial_{x_2} p_k
            + (\Pi_h\bv\cdot \bn_3)\partial_{x_3} p_k ] d\b x \\
      \nonumber  &\quad \    +\int_{e_{14}} (\Pi_h\bv)\cdot \bn_{14} p_k dS\\
      \nonumber  &= -\int_{T_1}  (\Pi_h\bv\cdot \bn_1)\cdot 0 d\b x  + 0+0
              +\int_{e_{14}} (\Pi_h\bv)\cdot \bn_{14} p_k dS\\
     \label{F14}
    &=\int_{e_{14}} (\Pi_h\bv)\cdot \bn_{14} p_k dS \quad\forall p_k\in P_{k}(e_{14}).
   } Next, for any $p_{k-1}\in P_{k-1}(T_1)$, we let $p_k\in P_k(T_1)$ be one of its
   anti-$x_1$-derivative, i.e.,  $\partial_{x_1} p_{k}=p_{k-1}$.
  Thus, by \eqref{div0}, \eqref{p-j3},  \eqref{p-j4} and \eqref{F14}, we get
\an{ \nonumber 0 &=\int_{T_1} \nabla \cdot \Pi_h\bv p_k d\b x \\
    \nonumber    &= -\int_{T_1} [(\Pi_h\bv \cdot\bn_1 )  \partial_{x_1} p_{k}  + 0
            + 0 ] d\b x
          +\int_{e_{14}} (\Pi_h\bv)\cdot \bn_{14} p_k dS\\
    \label{T1}  &=-\int_{T_1}  (\Pi_h\bv\cdot \bn_1)   p_{k-1}   d\b x  \quad\forall p_{k-1}\in P_{k-1}(T_1).
   }

Continuing work on $T_1$, by $\nabla \cdot \Pi_h\bv=0$,  all $a_{4,ijl}=0$ in \eqref{v-e}, since
   the divergence of each such term is non-zero and independent of the divergence of
   other terms.
Thus $\Pi_h\bv |_{T_i}$ is in $[P_k(T_i)]^d$, instead of $RT_k(T_i)$.
It can be linearly expanded by the three projections on three linearly independent
   directions.
In particular, on a corner tetrahedron $T_1$ we have three outer triangles $e_{1j}$ on
   $\partial T$.
On $T_1$,
\a{ \Pi_h\bv = A \p{ \Pi_h\bv \cdot \bn_{11} \\ \Pi_h\bv \cdot \bn_{12}\\
     \Pi_h\bv \cdot \bn_{13}}= A\p{ p_1 \\ p_2\\p_3 }, }
where $p_1, p_2$ and $ p_3$ are scalar $P_k$ polynomials, and $A$ is a $3\times 3$
   scalar matrix.

By the first equation in \eqref{p-j},  $p_1$ vanishes on $e_{11}$ and
\a{ p_1 = \lambda_1 q_{k-1} \quad \text{ on } \ T_1,
} where $\lambda_1$ is a barycentric coordinate of $T_1$ (which is a linear function assuming   $0$ on $e_{11}$),  and $q_{k-1}$ is a $P_{k-1}(T)$ polynomial.
Let $p_k\in P_k(T)$ be an anti-$x$-derivative of $(\bn_{11})_1 q_{k-1}$, i.e.,
   $(\nabla p_k)_1 = (\bn_{11})_1 q_{k-1}$.
Note that $(\nabla p_k)_2$ and $(\nabla p_k)_3$ can be anything (of $y$ and $z$ functions)
  which result in zero integrals below.
By \eqref{T1}, \eqref{p-j3} and \eqref{p-j4},
   since $\nabla \cdot \Pi_h \bv=0$, we get
\a{ \int_{T_1} \lambda_1 q_{k-1}^2 d\b x &
     = \int_{T_1} \Pi_h \bv \cdot( \b n_{11} q_{k-1})  d\b x
     =  0. }
Since $\lambda_1>0$ in $T_1$,  we conclude with $q_{k-1}=0$ and $p_1=0$.
Repeating the analysis, as $p_2=0$ on $e_{12}$ and $p_3=0$ on $e_{13}$,
    we get $p_2=p_3=0$ and $\Pi_h\bv =0$ on $T_1$.

Adding the equations \eqref{F14} and \eqref{T1} to \eqref{p-j},  $T_2$ would be a new
  corner tetrahedron with three no-flux boundary triangles.
  Repeating the estimates on $T_1$, it would
   lead $\Pi_h\bv=0$ on $T_2$. Sequentially,  we obtain $\Pi_h\bv=0$ on all $T_i$, i.e.,
   on the whole $T$.

 \eqref{P1} follows \eqref{p-j2},  \eqref{p-j3} and
     \eqref{p-j4}.  \eqref{p-j1} and
     \eqref{p-j7} imply \eqref{P2}.
For a $\bv  \in H(\div; \Omega)$ and a $v\in P_k(T)$,  we have, by \eqref{p-j}, \eqref{F14}
   and \eqref{T1},
\a{ (\nabla \cdot (\bv  -\Pi_h \bv ), v)_T &=
     \sum_{i=1}^n \Big(\int_{T_i} (\bv  -\Pi_h \bv )\cdot\nabla v d\b x
            + \int_{\partial T_i} (\bv  -\Pi_h \bv )\cdot\b n v d S\Big) \\
       &= \sum_{i=1}^n  0 + \int_{\partial T} (\bv  -\Pi_h \bv )\cdot\b n v d S\Big) =0.
  }That is, \eqref{key2} holds.

It follows from (\ref{key2}) and (\ref{d-d}) that for $v=\{v_0,v_b\}\in V_h^0$
\begin{eqnarray*}
-(\nabla\cdot\bv , \;v_0)_{\T_h}&=&-(\nabla\cdot\Pi_h\bv , \;v_0)_{\T_h}\\
&=&-(\nabla\cdot\Pi_h\bv , \;v_0)_{\T_h}+\l v_b, \Pi_h\bv \cdot\bn\r_{\partial \T_h}\\
&=&(\Pi_h\bv , \;\nabla_w v)_{\T_h},
\end{eqnarray*} which proves \eqref{key1}.

Since $[P_k(T)]^3\subset \Lambda_k$ and $\Pi_h$ is uni-solvent, $\Pi_h \b v = \b v$ for all
  $\bv \in [P_k(T)]^3$.
On one size $1$ $T$, by the finite dimensional norm-equivalence and the
   shape-regularity assumption on sub-triangles,  the interpolation is stable in $L^2(T)$,
  i.e.,
\an{  \|\Pi_h \bv  \|_T \le C \|\bv \|_T.   \label{T-stable} }
After a scaling, the constant $C$ in \eqref{T-stable} remains same for all $h>0$.
It follows that
\a{ \| \Pi_h\bv  -\bv  \|^2 &\le C
    \sum_{T\in\mathcal T_h} (\| \Pi_h(\bv  -p_{k,T}) \|_T^2 + \| p_{k,T} -\bv  \|_T^2 ) \\
      &\le C
    \sum_{T\in\mathcal T_h} (C \| \bv  -p_{k,T} \|_T^2 + \| p_{k,T} -\bv  \|_T^2 ) \\
      &\le C
    \sum_{T\in\mathcal T_h} h^{2k+2}  | \bv   |_{k+1,T}^2  \\
      &=C h^{2k+2}  | \bv   |_{k+1}^2,
} where $p_{k,T}$ is the $k$-th Taylor polynomial of $\bv $ on $T$.
\end{proof}

Let $Q_0$ and $Q_b$ be the two element-wise defined $L^2$ projections onto $P_k(T)$ and $P_k(e)$ with $e\subset\partial T$ on $T$ respectively. Define $Q_hu=\{Q_0u,Q_bu\}\in V_h$. Let $\Q_h$ be the element-wise defined $L^2$ projection onto $\Lambda_k(T)$ on each element $T$.

\begin{lemma}
Let $\phi\in H^1(\Omega)$, then on any $T\in\T_h$,
\begin{eqnarray}
\nabla_w Q_h\phi =\Q_h\nabla\phi.\label{key0}
%\nabla_w\phi =\Q_h\nabla\phi.\label{key}
\end{eqnarray}
\end{lemma}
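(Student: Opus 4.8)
The plan is to verify the identity $\nabla_w Q_h\phi = \mathbb{Q}_h\nabla\phi$ directly from the defining equation \eqref{d-d} of the weak gradient, testing against an arbitrary $\bq\in\Lambda_k(T)$. Fix $T\in\T_h$ and $\bq\in\Lambda_k(T)$. By definition, $(\nabla_w Q_h\phi,\bq)_T = -(Q_0\phi,\nabla\cdot\bq)_T + \langle Q_b\phi,\bq\cdot\bn\rangle_{\partial T}$. The goal is to show this equals $(\mathbb{Q}_h\nabla\phi,\bq)_T = (\nabla\phi,\bq)_T$, where the last equality holds because $\mathbb{Q}_h$ is the $L^2$ projection onto $\Lambda_k(T)$ and $\bq\in\Lambda_k(T)$.

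First I would integrate $(\nabla\phi,\bq)_T$ by parts to obtain $(\nabla\phi,\bq)_T = -(\phi,\nabla\cdot\bq)_T + \langle\phi,\bq\cdot\bn\rangle_{\partial T}$; this is legitimate since $\bq\in H(\div,T)$ and $\phi\in H^1(\Omega)$. Next I would replace $\phi$ by its projections in each of the two terms using the definitions of $Q_0$ and $Q_b$: since $\nabla\cdot\bq\in P_k(T)$ (this is built into the definition \eqref{lambda} of $\Lambda_k(T)$), the $L^2$ projection property of $Q_0$ gives $(\phi,\nabla\cdot\bq)_T = (Q_0\phi,\nabla\cdot\bq)_T$; and since $\bq\cdot\bn|_e\in P_k(e)$ for each $e\subset\partial T$ (again from \eqref{lambda}), the $L^2$ projection property of $Q_b$ on each edge gives $\langle\phi,\bq\cdot\bn\rangle_{\partial T} = \langle Q_b\phi,\bq\cdot\bn\rangle_{\partial T}$. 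Combining these, $(\nabla\phi,\bq)_T = -(Q_0\phi,\nabla\cdot\bq)_T + \langle Q_b\phi,\bq\cdot\bn\rangle_{\partial T} = (\nabla_w Q_h\phi,\bq)_T$. Since $\bq\in\Lambda_k(T)$ was arbitrary and both $\nabla_w Q_h\phi$ and $\mathbb{Q}_h\nabla\phi$ lie in $\Lambda_k(T)$, the identity \eqref{key0} follows.

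The only subtlety — and the step I would be most careful about — is ensuring the two key algebraic facts $\nabla\cdot\bq\in P_k(T)$ and $\bq\cdot\bn|_e\in P_k(e)$ that make the projection substitutions valid; but these are exactly the two membership conditions imposed in the definition \eqref{lambda} of $\Lambda_k(T)$, so no real obstacle arises. This is precisely why $\Lambda_k(T)$ was defined with those constraints rather than taken to be the full piecewise $RT_k$ space. A minor point worth a sentence is that $\phi\in H^1(\Omega)$ restricted to $T$ is in $H^1(T)$, so the integration by parts and the single-valuedness of the trace on $\partial T$ are unproblematic; and $Q_b\phi$ is well defined edge-by-edge from that trace.
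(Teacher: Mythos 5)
Your proposal is correct and follows essentially the same argument as the paper: test against an arbitrary $\bq\in\Lambda_k(T)$, use the projection properties of $Q_0$ and $Q_b$ (valid precisely because $\nabla\cdot\bq\in P_k(T)$ and $\bq\cdot\bn|_e\in P_k(e)$ by the definition of $\Lambda_k(T)$), integrate by parts, and invoke the $L^2$ projection property of $\Q_h$. The paper simply runs the same chain of equalities starting from $(\nabla_w Q_h\phi,\bq)_T$ rather than from $(\nabla\phi,\bq)_T$.
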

\begin{proof}
Using (\ref{d-d}), the definition of $\Lambda_k(T)$ and  integration by parts, we have that for
any $\bq\in \Lambda_k(T)$
\begin{eqnarray*}
(\nabla_w Q_h\phi,\bq)_T &=& -(Q_0\phi,\nabla\cdot\bq)_T
+\langle Q_b\phi,\bq\cdot\bn\rangle_{\pT}\\
&=& -(\phi,\nabla\cdot\bq)_T
+\langle \phi,\bq\cdot\bn\rangle_{\pT}\\
&=&(\nabla \phi,\bq)_T=(\Q_h\nabla\phi,\bq)_T,
\end{eqnarray*}
which implies the desired identity (\ref{key0}). We have proved the lemma.
\end{proof}

For any $v\in V_h$, let
\begin{equation}\label{3barnorm}
\3bar v\3bar^2=(\nabla_wv,\nabla_wv)_{\T_h}.
\end{equation}
We introduce a discrete $H^1$ semi-norm as follows:
\begin{equation}\label{norm}
\|v\|_{1,h} = \left( \sum_{T\in\T_h}\left(\|\nabla
v_0\|_T^2+h_T^{-1} \|  v_0-v_b\|^2_\pT\right) \right)^{\frac12}.
\end{equation}
It is easy to see that $\|v\|_{1,h}$ define a norm in $V_h^0$. The following lemma indicates that $\|\cdot\|_{1,h}$ is equivalent
to the $\3bar\cdot\3bar$ in (\ref{3barnorm}).

\begin{lemma} There exist two positive constants $C_1$ and $C_2$ such
that for any $v=\{v_0,v_b\}\in V_h$, we have
\begin{equation}\label{happy}
C_1 \|v\|_{1,h}\le \3bar v\3bar \leq C_2 \|v\|_{1,h}.
\end{equation}
\end{lemma}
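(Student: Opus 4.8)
The plan is to establish the norm equivalence \eqref{happy} by comparing both $\3bar v\3bar$ and $\|v\|_{1,h}$ against a common intermediate quantity, working element by element since both norms decompose as sums over $T\in\T_h$. The natural bridge is the standard weak-gradient identity: integrating by parts in \eqref{d-d}, for any $\bq\in\Lambda_k(T)$ we have $(\nabla_w v,\bq)_T = (\nabla v_0,\bq)_T + \langle v_b-v_0,\bq\cdot\bn\rangle_{\pT}$. Since $\nabla v_0|_T\in[P_{k-1}(T)]^d\subset\Lambda_k(T)$ by the construction \eqref{lambda}, we may take $\bq=\nabla_w v$ and $\bq = \nabla v_0$ in turn to relate the three quantities $\|\nabla_w v\|_T$, $\|\nabla v_0\|_T$, and $h_T^{-1/2}\|v_0-v_b\|_{\pT}$.

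For the upper bound $\3bar v\3bar\le C_2\|v\|_{1,h}$: choosing $\bq=\nabla_w v|_T\in\Lambda_k(T)$ gives $\|\nabla_w v\|_T^2 = (\nabla v_0,\nabla_w v)_T + \langle v_b - v_0,\nabla_w v\cdot\bn\rangle_{\pT}$. I would bound the first term by Cauchy--Schwarz, and for the boundary term use the trace/inverse inequality $\|\nabla_w v\cdot\bn\|_{\pT}\le C h_T^{-1/2}\|\nabla_w v\|_T$ — which holds because $\nabla_w v$ is a piecewise polynomial of bounded degree on a shape-regular subdivision of $T$ — together with $\langle v_b-v_0,\nabla_w v\cdot\bn\rangle_{\pT}\le \|v_0-v_b\|_{\pT}\,\|\nabla_w v\cdot\bn\|_{\pT}$. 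Dividing through by $\|\nabla_w v\|_T$ and summing over $T$ yields the bound. For the lower bound $C_1\|v\|_{1,h}\le\3bar v\3bar$, I would argue in two pieces. First, taking $\bq=\nabla v_0\in[P_{k-1}(T)]^d\subset\Lambda_k(T)$ in the identity above gives $(\nabla_w v,\nabla v_0)_T = \|\nabla v_0\|_T^2 + \langle v_b-v_0,\nabla v_0\cdot\bn\rangle_{\pT}$; to control the boundary term I would instead test with a cleverly chosen $\bq\in\Lambda_k(T)$ whose normal trace on $\pT$ reproduces $v_0-v_b$ and whose $L^2(T)$-norm is controlled by $h_T^{1/2}\|v_0-v_b\|_{\pT}$, so that $\langle v_b-v_0,\bq\cdot\bn\rangle_{\pT}$ equals $-\|v_0-v_b\|_{\pT}^2$ up to the volume term. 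Combining the estimates obtained from these two test functions, and using an inverse inequality to absorb $\|\nabla v_0\|_T$ where needed, gives $h_T^{-1}\|v_0-v_b\|_{\pT}^2 + \|\nabla v_0\|_T^2 \le C\|\nabla_w v\|_T^2$.

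The main obstacle is the construction of the auxiliary test function $\bq\in\Lambda_k(T)$ realizing a prescribed normal trace on $\pT$ with the correct scaling, since $\Lambda_k(T)$ is not a single Raviart--Thomas space but the constrained $H(\div;T)$ space \eqref{lambda} glued from pieces $RT_k(T_i)$ on the subtriangulation. Here I would invoke the degrees-of-freedom analysis already carried out in the proof of the first lemma: the normal-trace degrees of freedom $\bv\cdot\bn|_e\in P_k(e)$ on the boundary faces are among the unisolvent set for $\Lambda_k(T)$, so one can prescribe $\bq\cdot\bn = Q_b(v_0-v_b)$ (indeed $v_0-v_b\in P_k(e)$ already) on $\pT$ while setting the interior and jump degrees of freedom to zero; a scaling argument on the reference configuration of size $1$, combined with shape-regularity of the sub-triangles exactly as in the derivation of \eqref{T-stable}, then gives $\|\bq\|_T\le C h_T^{1/2}\|v_0-v_b\|_{\pT}$. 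The remaining ingredients — trace inequalities, inverse inequalities, and the domain inverse inequality $\|\nabla v_0\|_{\pT}\le Ch_T^{-1/2}\|\nabla v_0\|_T$ — are all standard for shape-regular polytopal meshes under the assumptions of \cite{wymix}, so once the test function is in hand the rest is routine bookkeeping and summation over $T\in\T_h$.
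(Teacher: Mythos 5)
Your proposal is correct and follows essentially the same route as the paper: you test the identity $(\nabla_w v,\bq)_T=(\nabla v_0,\bq)_T+\langle v_b-v_0,\bq\cdot\bn\rangle_{\pT}$ with $\bq=\nabla_w v$ plus trace and inverse inequalities for the upper bound, and for the lower bound you build $\bq_0\in\Lambda_k(T)$ with prescribed normal trace, zero interior degrees of freedom, and $\|\bq_0\|_T\le Ch_T^{1/2}\|v_0-v_b\|_{\pT}$, which is precisely the paper's $\bq_0$ in \eqref{2e}--\eqref{22e} (your construction via the unisolvent degrees of freedom of $\Pi_h$ actually supplies detail the paper only asserts). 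The one point to make explicit is that the vanishing interior moments force $(\nabla v_0,\bq_0)_T=0$ exactly (since $\nabla v_0\in[P_{k-1}(T)]^d$), so there is no volume term to ``absorb'': you first obtain $h_T^{-1/2}\|v_0-v_b\|_{\pT}\le C\|\nabla_w v\|_T$ and only then the bound on $\|\nabla v_0\|_T$, whereas combining the two estimates without this orthogonality would be circular.
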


\medskip

\begin{proof}
For any $v=\{v_0,v_b\}\in V_h$, it follows from the definition of
weak gradient (\ref{d-d}) and integration by parts that
\begin{eqnarray}\label{n-1}
(\nabla_wv,\bq)_T=(\nabla v_0,\bq)_T+\l v_b-v_0,
\bq\cdot\bn\r_\pT,\quad \forall \bq\in \Lambda_k(T).
\end{eqnarray}
By letting $\bq=\nabla_w v$ in (\ref{n-1}) we arrive at
\begin{eqnarray*}
(\nabla_wv,\nabla_w v)_T=(\nabla v_0,\nabla_w v)_T+\l v_b-v_0,
\nabla_w v\cdot\bn\r_\pT.
\end{eqnarray*}
From the trace inequality (\ref{trace}) and the inverse inequality
we have
\begin{eqnarray*}
\|\nabla_wv\|^2_T &\le& \|\nabla v_0\|_T \|\nabla_w v\|_T+ \|
v_0-v_b\|_\pT \|\nabla_w v\|_\pT\\
&\le& \|\nabla v_0\|_T \|\nabla_w v\|_T+ Ch_T^{-1/2}\|
v_0-v_b\|_\pT \|\nabla_w v\|_T,
\end{eqnarray*}
which implies
$$
\|\nabla_w v\|_T \le C \left(\|\nabla v_0\|_T +h_T^{-1/2}\|v_0-v_b\|_\pT\right),
$$
and consequently
$$\3bar v\3bar \leq C_2 \|v\|_{1,h}.$$

Next we will prove $C_1 \|v\|_{1,h}\le \3bar v\3bar $.
The construction of $\Lambda_k(T)$ implies there exists $\bq_0\in \Lambda_h(T)$ such that
\begin{equation}\label{2e}
(\nabla v_0,\bq_0)_T=0, \ \
    \ \ \l v_b-v_0, \bq_0\cdot\bn\r_\pT=\|v_0-v_b\|_\pT^2,
\end{equation}
and
\begin{equation}
\|\bq_0\|_T \le C h_T^{1/2} \| v_b-v_0 \|_e.\label{22e}
\end{equation}
Letting $\bq=\bq_0$ in (\ref{n-1}), we get
\begin{equation}\label{n3}
(\nabla_wv,\bq_0)_T=\|v_b-v_0\|^2_e.
\end{equation}
It follows from Cauchy-Schwarz inequality and (\ref{22e}) that
\[
\|v_b-v_0\|^2_e\le C\|\nabla_w v\|_T\|\bq_0\|_T
 \le Ch_T^{1/2}\|\nabla_w v\|_T\|v_0-v_b\|_e,
\]
which implies
\begin{equation}\label{n4}
h_T^{-1/2}\|v_0-v_b\|_\pT\le C\|\nabla_w v\|_T.
\end{equation}
It follows from the trace inequality, the inverse inequality and (\ref{n4}),
$$
\|\nabla v_0\|_T^2 \leq \|\nabla_w v\|_T \|\nabla v_0\|_T
+Ch_T^{-1/2}\| v_0-v_b\|_\pT \|\nabla v_0\|_T\le C\|\nabla_w v\|_T \|\nabla v_0\|_T.
$$
Combining the above estimate and (\ref{n4}),
   by the definition \eqref{norm},
 we prove  the lower bound of (\ref{happy}) and complete the proof of the lemma.
\end{proof}

\medskip

\begin{lemma}
The weak Galerkin finite element scheme (\ref{wg}) has a unique
solution.
\end{lemma}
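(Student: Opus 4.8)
The plan is to reduce the well-posedness of the square linear system \eqref{wg} to the uniqueness of its solution, which is standard for finite-dimensional linear systems. Since $V_h^0$ is finite-dimensional and \eqref{wg} is a linear system with as many equations as unknowns, it suffices to show that the only solution of the homogeneous problem is zero. So I would take $f=0$ and let $u_h=\{u_0,u_b\}\in V_h^0$ satisfy $(\nabla_w u_h,\nabla_w v)_{\T_h}=0$ for all $v\in V_h^0$.

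The key step is to choose the test function $v=u_h$, which gives $(\nabla_w u_h,\nabla_w u_h)_{\T_h}=\3bar u_h\3bar^2=0$. By the norm equivalence established in \eqref{happy}, this forces $\|u_h\|_{1,h}=0$, hence by the definition \eqref{norm} we get $\nabla u_0=0$ on every $T\in\T_h$ and $u_0=u_b$ on $\partial T$ for every $T$. The first condition makes $u_0$ constant on each element; the second condition, together with single-valuedness of $u_b$ on interior edges/faces, propagates that constant across the whole connected domain $\Omega$, so $u_0$ is a global constant and $u_b$ equals that same constant. Finally, the boundary condition $u_b=0$ on $\partial\Omega$ (built into $V_h^0$) forces that constant to be zero, so $u_0\equiv 0$ and $u_b\equiv 0$, i.e. $u_h=0$.

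I do not expect any serious obstacle here: the real content has already been placed in the preceding lemma, namely that $\3bar\cdot\3bar$ controls the genuine norm $\|\cdot\|_{1,h}$ on $V_h^0$, which is exactly what rules out a nontrivial kernel. The only mildly delicate point is the propagation-of-constants argument across elements, which relies on $\Omega$ being connected and on each $v\in V_h$ having a single value $v_b$ per edge (emphasized in the text after \eqref{vh0space}); I would state this briefly rather than belabor it. Having shown the homogeneous system has only the trivial solution, I conclude that the coefficient matrix is nonsingular and \eqref{wg} has a unique solution for any right-hand side $f$.
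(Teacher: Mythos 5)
Your proposal is correct and follows essentially the same route as the paper: test the homogeneous equation with the solution itself, conclude $\3bar\cdot\3bar$ vanishes, invoke the norm equivalence (\ref{happy}), and use that $\|\cdot\|_{1,h}$ is a norm on $V_h^0$ (whose verification you merely spell out via the constant-propagation and boundary-condition argument, which the paper leaves as "easy to see"). Your added remark that triviality of the kernel of the square finite-dimensional system also gives existence is a harmless, standard complement to the paper's uniqueness argument.
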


\smallskip

\begin{proof}
If $u_h^{(1)}$ and $u_h^{(2)}$ are two solutions of (\ref{wg}), then
$\varepsilon_h=u_h^{(1)}-u_h^{(2)}\in V_h^0$ would satisfy the following equation
$$
(\nabla_w \varepsilon_h,\nabla_w v)=0,\qquad\forall v\in V_h^0.
$$
 Then by letting $v=\varepsilon_h$ in the above
equation we arrive at
$$
\3bar \varepsilon_h\3bar^2 = (\nabla_w \varepsilon_h,\nabla_w \varepsilon_h)=0.
$$
It follows from (\ref{happy}) that $\|\varepsilon_h\|_{1,h}=0$. Since $\|\cdot\|_{1,h}$ is a norm in $V_h^0$, one has $\varepsilon_h=0$.
 This completes the proof of the lemma.
\end{proof}

\section{Error Equations}
Let $\epsilon_h=Q_hu-u_h$. Next we derive two error equations that $\epsilon_h$ satisfies. One will be used in energy norm error analysis and another one for $L^2$ error estimate.

\begin{lemma}
For any $v\in V_h^0$, the following error equation holds true
\begin{eqnarray}
(\nabla_w \epsilon_h,\nabla_wv)_{\T_h}=\ell(u,v),\label{ee}
\end{eqnarray}
where
\begin{eqnarray*}
\ell(u,v)&=&(\bbQ_h\nabla u-\Pi_h\nabla u, \nabla_w v)_{\T_h}
\end{eqnarray*}
\end{lemma}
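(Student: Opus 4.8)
The plan is to derive the error equation~\eqref{ee} by testing the exact PDE against a discrete function and comparing with the discrete scheme~\eqref{wg}. First I would start from the model problem $-\Delta u = f$, which we may rewrite as $-\nabla\cdot(\nabla u) = f$. For any $v=\{v_0,v_b\}\in V_h^0$, multiply by $v_0$ and integrate over $\T_h$ to obtain $-(\nabla\cdot\nabla u,\, v_0)_{\T_h} = (f, v_0)$. The key is to apply identity~\eqref{key1} of the first lemma with $\bv=\nabla u\in H(\div;\Omega)$ (which is legitimate since $-\nabla\cdot\nabla u = f\in L^2(\Omega)$), giving $-(\nabla\cdot\nabla u, v_0)_{\T_h} = (\Pi_h\nabla u,\, \nabla_w v)_{\T_h}$. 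Hence $(\Pi_h\nabla u,\, \nabla_w v)_{\T_h} = (f, v_0)$ for all $v\in V_h^0$.

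Next I would bring in the interpolant $Q_h u$ and Lemma~\eqref{key0}, which says $\nabla_w Q_h u = \bbQ_h\nabla u$ on each element (applicable since $u\in H^1(\Omega)$). Therefore $(\nabla_w Q_h u,\, \nabla_w v)_{\T_h} = (\bbQ_h\nabla u,\, \nabla_w v)_{\T_h}$. Now I subtract the weak Galerkin scheme~\eqref{wg}, namely $(\nabla_w u_h,\, \nabla_w v)_{\T_h} = (f, v_0)$, from this identity. Using $\epsilon_h = Q_h u - u_h$ and the linearity of $\nabla_w$, the left side becomes $(\nabla_w\epsilon_h,\, \nabla_w v)_{\T_h}$. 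The right side becomes $(\bbQ_h\nabla u,\, \nabla_w v)_{\T_h} - (f, v_0)$, and substituting $(f,v_0) = (\Pi_h\nabla u,\, \nabla_w v)_{\T_h}$ from the previous paragraph yields $(\bbQ_h\nabla u - \Pi_h\nabla u,\, \nabla_w v)_{\T_h} = \ell(u,v)$. This is exactly~\eqref{ee}.

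I expect no serious obstacle here; the proof is essentially a bookkeeping argument that chains together the three identities~\eqref{key1}, \eqref{key0}, and the scheme~\eqref{wg}. The one point requiring a small amount of care is the justification that $\nabla u\in H(\div;\Omega)$ so that $\Pi_h\nabla u$ is well defined and~\eqref{key1} applies — this follows because $u$ solves~\eqref{pde}--\eqref{bc} with $f\in L^2(\Omega)$, so $\nabla\cdot\nabla u = \Delta u = -f\in L^2(\Omega)$. One should also note that the boundary term $\langle v_b,\, \Pi_h\nabla u\cdot\bn\rangle_{\partial\T_h}$ that appears when unfolding~\eqref{key1} vanishes: across interior faces $v_b$ is single-valued while $\Pi_h\nabla u\cdot\bn$ is continuous (by construction $\Pi_h\nabla u\in H(\div;\Omega)$, using~\eqref{p-j5} and~\eqref{p-j7}) so the contributions cancel in pairs, and on $\partial\Omega$ we have $v_b=0$ since $v\in V_h^0$ — but this is already internalized in the statement of~\eqref{key1}, so it need not be re-argued.
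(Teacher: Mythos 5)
Your proposal is correct and follows essentially the same route as the paper's own proof: test the PDE with $v_0$, invoke \eqref{key1} to get $(f,v_0)=(\Pi_h\nabla u,\nabla_w v)_{\T_h}$, use \eqref{key0} to rewrite $\nabla_w Q_h u$ as $\bbQ_h\nabla u$, and subtract the scheme \eqref{wg}. The additional remarks you make (that $\nabla u\in H(\div;\Omega)$ because $f\in L^2(\Omega)$, and that the boundary terms are already absorbed into \eqref{key1}) are sound and only add justification the paper leaves implicit.
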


\begin{proof}
For $v=\{v_0,v_b\}\in V_h^0$, testing (\ref{pde}) by  $v_0$  and using (\ref{key1}),  we arrive at
\begin{equation}\label{m1}
(f, v_0)=-(\nabla\cdot\nabla u, v_0)_{\T_h}=(\Pi_h\nabla u, \nabla_w v)_{\T_h}.
\end{equation}
It follows from (\ref{key0}) and (\ref{m1})
\begin{equation}\label{j2}
(\nabla_wQ_h u, \nabla_w v)_{\T_h}=(f, v_0)+\ell(u,v).
\end{equation}
The error equation follows from subtracting (\ref{wg}) from (\ref{j2}),
\begin{eqnarray*}
(\nabla_w\epsilon_h,\nabla_wv)_{\T_h}=\ell(u,v)\quad \forall v\in V_h^0.
\end{eqnarray*}
This completes the proof of the lemma.
\end{proof}

\begin{lemma}
For any $v\in V_h^0$, the following error equation holds true
\begin{eqnarray}
(\nabla_w\epsilon_h,\nabla_wv)_{\T_h}=\ell_1(u,v),\label{ee1}
\end{eqnarray}
where
\begin{eqnarray*}
\ell_1(u,v)&=& \langle (\nabla u-\Q_h\nabla u)\cdot\bn,v_0-v_b\rangle_{\partial T_h}.
%\ell_2(u,v)&=& (a\nabla_wQ_h u-a\Q_h\nabla u,\nabla_wv)_{\T_h}.
\end{eqnarray*}
\end{lemma}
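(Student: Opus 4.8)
The plan is to derive the second error equation directly from the first one, equation \eqref{ee}, by rewriting the consistency functional $\ell(u,v)$ in the equivalent form $\ell_1(u,v)$. The starting point is the identity
\[
\ell(u,v)=(\bbQ_h\nabla u-\Pi_h\nabla u,\;\nabla_w v)_{\T_h}.
\]
First I would invoke the definition of the weak gradient \eqref{d-d}, or rather its integration-by-parts form \eqref{n-1}, applied with $\bq=\bbQ_h\nabla u-\Pi_h\nabla u\in\Lambda_k(T)$ on each $T$: this turns the volume pairing against $\nabla_w v$ into
\[
(\nabla v_0,\;\bbQ_h\nabla u-\Pi_h\nabla u)_T+\langle v_b-v_0,\;(\bbQ_h\nabla u-\Pi_h\nabla u)\cdot\bn\rangle_{\pT}.
\]
Summing over $T\in\T_h$ gives $\ell(u,v)$ as a sum of a volume term and a boundary term.

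Next I would kill the volume term. On each $T$, $\nabla v_0\in[P_{k-1}(T)]^d$; by property \eqref{P1} of $\Pi_h$ we have $(\Pi_h\nabla u,\nabla v_0)_T=(\nabla u,\nabla v_0)_T$, and by the definition of the $L^2$-projection $\bbQ_h$ onto $\Lambda_k(T)\supset[P_{k-1}(T)]^d$ we likewise have $(\bbQ_h\nabla u,\nabla v_0)_T=(\nabla u,\nabla v_0)_T$. Hence $(\bbQ_h\nabla u-\Pi_h\nabla u,\nabla v_0)_T=0$ for every $T$, and only the boundary term survives:
\[
\ell(u,v)=\sum_{T\in\T_h}\langle v_b-v_0,\;(\bbQ_h\nabla u-\Pi_h\nabla u)\cdot\bn\rangle_{\pT}.
\]

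Then I would dispose of the $\Pi_h$ contribution on the boundary. By property \eqref{P2}, $\Pi_h\nabla u\cdot\bn$ and $\nabla u\cdot\bn$ have the same $L^2(e)$-projection onto $P_k(e)$ for every $e\subset\pT$; since $v_0-v_b$ restricted to $e$ lies in $P_k(e)$ (indeed $v_b\in P_k(e)$ and the trace of $v_0\in P_k(T)$ on $e$ is a polynomial of degree $\le k$), we may replace $\Pi_h\nabla u\cdot\bn$ by $\nabla u\cdot\bn$ inside the pairing. Moreover, since $v_b$ is single-valued on interior faces and $v_b=0$ on $\partial\Omega$ for $v\in V_h^0$, the full-grid sum of $\langle v_b,\,\nabla u\cdot\bn\rangle$ telescopes to zero because $\nabla u\in H(\div;\Omega)$ has continuous normal trace across interior faces; so the $\nabla u\cdot\bn$ part can equivalently be written against $v_0-v_b$ or just absorbed. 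Collecting terms yields
\[
\ell(u,v)=\sum_{T\in\T_h}\langle(\nabla u-\bbQ_h\nabla u)\cdot\bn,\;v_0-v_b\rangle_{\pT}=\ell_1(u,v),
\]
and combining with \eqref{ee} gives \eqref{ee1}.

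The step I expect to require the most care is the bookkeeping on face terms: making sure that the replacement of $\Pi_h\nabla u\cdot\bn$ by $\nabla u\cdot\bn$ is legitimate (which needs $v_0-v_b|_e\in P_k(e)$ together with \eqref{P2}), and that the $\bbQ_h\nabla u\cdot\bn$ term — which is not generally continuous across interior faces and is only a piecewise $P_k(e)$ object through $\bv\cdot\bn|_e\in P_k(e)$ in \eqref{lambda} — is handled consistently when one passes between the forms $(\nabla u-\bbQ_h\nabla u)\cdot\bn$ and $(\bbQ_h\nabla u-\Pi_h\nabla u)\cdot\bn$. The cleanest route is probably never to split off $\nabla u\cdot\bn$ as an independent telescoping term, but rather to argue: on each $T$, $\langle v_b-v_0,(\bbQ_h\nabla u-\Pi_h\nabla u)\cdot\bn\rangle_{\pT}=\langle v_b-v_0,(\bbQ_h\nabla u-\nabla u)\cdot\bn\rangle_{\pT}$ by \eqref{P2}, which is exactly $-\langle (\nabla u-\bbQ_h\nabla u)\cdot\bn, v_0-v_b\rangle_{\pT}$, giving $\ell_1$ term by term with no global cancellation needed at all.
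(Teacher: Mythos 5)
Your proposal is correct, but it follows a genuinely different route from the paper's. The paper proves \eqref{ee1} from scratch: it tests \eqref{pde} by $v_0$, integrates by parts and uses $\sum_{T\in\T_h}\langle\nabla u\cdot\bn,v_b\rangle_\pT=0$ to get \eqref{mm1}, then converts $(\nabla u,\nabla v_0)_{\T_h}$ into $(\nabla_w Q_hu,\nabla_w v)_{\T_h}$ plus the boundary term via the projection property of $\Q_h$, the weak-gradient definition \eqref{d-d} and the commuting identity \eqref{key0}, and finishes by subtracting \eqref{wg}; the interpolation $\Pi_h$ never appears. You instead start from the already established error equation \eqref{ee} and show the two consistency functionals coincide on $V_h^0$: expanding $(\bbQ_h\nabla u-\Pi_h\nabla u,\nabla_w v)_T$ by \eqref{n-1} is legitimate since the difference lies in $\Lambda_k(T)$, the volume term dies by \eqref{P1} together with $\nabla v_0\in[P_{k-1}(T)]^d\subset\Lambda_k(T)$, and on each face $\Pi_h\nabla u\cdot\bn$ can be traded for $\nabla u\cdot\bn$ by \eqref{P2} because $(v_0-v_b)|_e\in P_k(e)$; this gives $\ell(u,v)=\ell_1(u,v)$ element by element. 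The local replacement you describe in your closing paragraph is the version to keep; the telescoping digression about $\sum\langle v_b,\nabla u\cdot\bn\rangle$ in the middle is unnecessary and vaguer, though not wrong for $u\in H^2$. What your route buys is the extra observation that $\ell$ and $\ell_1$ are literally the same functional on $V_h^0$, at the cost of leaning on the properties of $\Pi_h$; what the paper's route buys is a self-contained derivation whose intermediate identity \eqref{j1} is then reused verbatim, with $u$ replaced by the dual solution $\Phi$, in the $L^2$ duality argument.
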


\begin{proof}
For $v=\{v_0,v_b\}\in V_h^0$, testing (\ref{pde}) by  $v_0$  and using integration by parts and the fact that
$\sum_{T\in\T_h}\langle \nabla u\cdot\bn, v_b\rangle_\pT=0$,  we arrive at
\begin{equation}\label{mm1}
(\nabla u,\nabla v_0)_{\T_h}- \langle
\nabla u\cdot\bn,v_0-v_b\rangle_{\partial T_h}=(f,v_0).
\end{equation}

It follows from integration by parts, (\ref{d-d}) and (\ref{key0})  that
\begin{eqnarray}
(\nabla u,\nabla v_0)_{\T_h}&=&(\Q_h\nabla  u,\nabla v_0)_{\T_h}\nonumber\\
&=&-(v_0,\nabla\cdot (\Q_h\nabla u))_{\T_h}+\langle v_0, \Q_h\nabla u\cdot\bn\rangle_{\partial\T_h}\nonumber\\
&=&(\Q_h\nabla u, \nabla_w v)_{\T_h}+\langle v_0-v_b,\Q_h\nabla u\cdot\bn\rangle_{\partial\T_h}\nonumber\\
&=&( \nabla_w Q_hu, \nabla_w v)_{\T_h}+\langle v_0-v_b,\Q_h\nabla u\cdot\bn\rangle_{\partial\T_h}.\label{j1}
\end{eqnarray}
Combining (\ref{mm1}) and (\ref{j1}) gives
\begin{eqnarray}
(\nabla_w Q_hu,\nabla_w v)_{\T_h}&=&(f,v_0)+\ell_1(u,v).\label{jj2}
\end{eqnarray}
The error equation follows from subtracting (\ref{wg}) from (\ref{jj2}),
\begin{eqnarray*}
(\nabla_w\epsilon_h,\nabla_wv)_{\T_h}=\ell_1(u,v)\quad \forall v\in V_h^0.
\end{eqnarray*}
This completes the proof of the lemma.
\end{proof}

\section{Error Estimates}

For any function $\varphi\in H^1(T)$, the following trace
inequality holds true (see \cite{wymix} for details):
\begin{equation}\label{trace}
\|\varphi\|_{e}^2 \leq C \left( h_T^{-1} \|\varphi\|_T^2 + h_T
\|\nabla \varphi\|_{T}^2\right).
\end{equation}

\begin{theorem} Let $u_h\in V_h$ be the weak Galerkin finite element solution of (\ref{wg}). Assume the exact solution $u\in H^{k+2}(\Omega)$. Then,
there exists a constant $C$ such that
\begin{equation}\label{err1}
\3bar Q_hu-u_h\3bar \le Ch^{k+1}|u|_{k+2}.
\end{equation}
\end{theorem}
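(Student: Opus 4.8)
The plan is to start from the error equation \eqref{ee}, which gives
\[
(\nabla_w \epsilon_h, \nabla_w v)_{\T_h} = \ell(u,v) = (\bbQ_h \nabla u - \Pi_h \nabla u, \nabla_w v)_{\T_h}
\]
for all $v \in V_h^0$, and then take $v = \epsilon_h$. This immediately yields
\[
\3bar \epsilon_h \3bar^2 = (\bbQ_h \nabla u - \Pi_h \nabla u, \nabla_w \epsilon_h)_{\T_h} \le \|\bbQ_h \nabla u - \Pi_h \nabla u\|_{\T_h} \cdot \3bar \epsilon_h \3bar
\]
by Cauchy--Schwarz and the definition \eqref{3barnorm} of the triple-bar norm. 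Dividing through, the whole theorem reduces to the single estimate
\[
\|\bbQ_h \nabla u - \Pi_h \nabla u\|_{\T_h} \le C h^{k+1} |u|_{k+2}.
\]

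To prove this, I would insert the $L^2$-best approximation (or Taylor polynomial) of $\nabla u$ and use the triangle inequality. Since $\bbQ_h$ is the element-wise $L^2$ projection onto $\Lambda_k(T) \supset [P_k(T)]^d$, standard approximation theory gives $\|\nabla u - \bbQ_h \nabla u\|_{\T_h} \le C h^{k+1} |u|_{k+2}$. For the $\Pi_h$ term, the key input is estimate \eqref{key3} of the first lemma, namely $\|\Pi_h \bv - \bv\| \le C h^{k+1} |\bv|_{k+1}$, applied with $\bv = \nabla u$, which has $|\nabla u|_{k+1} = |u|_{k+2}$; this gives $\|\nabla u - \Pi_h \nabla u\|_{\T_h} \le C h^{k+1} |u|_{k+2}$. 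Combining the two bounds via $\|\bbQ_h \nabla u - \Pi_h \nabla u\| \le \|\bbQ_h \nabla u - \nabla u\| + \|\nabla u - \Pi_h \nabla u\|$ finishes the argument.

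The one genuinely technical point — and where I'd expect to spend the most care — is the applicability of \eqref{key3} here: that estimate was proven for $\bv \in H(\div,\Omega)$ of the required regularity, and one must confirm $\nabla u \in H^{k+1}(\Omega)$ (equivalently $u \in H^{k+2}(\Omega)$, which is exactly the hypothesis) and that $\nabla u \in H(\div; \Omega)$ since $\nabla \cdot \nabla u = \Delta u = -f \in L^2(\Omega)$. Everything else is routine: the scaling/shape-regularity constants are already absorbed into the lemmas of Section~3, and no further use of the mesh structure is needed. I would also remark that, strictly, one needs $v = \epsilon_h \in V_h^0$, which holds since $Q_h u$ has vanishing boundary trace ($Q_b$ of the zero Dirichlet data is zero) and $u_h \in V_h^0$ by construction, so the error equation is legitimately tested against $\epsilon_h$ itself.
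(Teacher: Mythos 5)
Your proposal is correct and follows essentially the same route as the paper: test the error equation \eqref{ee} with $v=\epsilon_h$, apply Cauchy--Schwarz, and bound $\|\bbQ_h\nabla u-\Pi_h\nabla u\|$ by inserting $\nabla u$ and using the $L^2$-projection approximation property together with \eqref{key3}. Your added checks (that $\epsilon_h\in V_h^0$ and that $\nabla u$ has the regularity needed for \eqref{key3}) are sound and, if anything, slightly more careful than the paper's own write-up.
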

\begin{proof}
Letting $v=\epsilon_h$ in (\ref{ee}) gives
\begin{eqnarray}
\3bar \epsilon_h\3bar^2&=&\ell(u,\epsilon_h).\label{eee1}
\end{eqnarray}
The definitions of $\bbQ_h$ and $\Pi_h$ imply
\begin{eqnarray}
|\ell(u,\epsilon_h)|&=&|(\bbQ_h\nabla u-\Pi_h\nabla u, \nabla_w \epsilon_h)_{\T_h}|\nonumber\\
&\le&(\sum_T\|\bbQ_h\nabla u-\Pi_h\nabla u\|_T)^{1/2}\3bar\epsilon_h\3bar\nonumber\\
&\le&(\sum_T\|\bbQ_h\nabla u-\nabla u+\nabla u-\Pi_h\nabla u\|_T)^{1/2}\3bar\epsilon_h\3bar\nonumber\\
&\le& Ch^{k+1}|u|_{k+2}\3bar \epsilon_h\3bar.\label{eee3}
\end{eqnarray}
Combining (\ref{eee1}) and (\ref{eee3}), we arrive
\[
\3bar \epsilon_h\3bar \le Ch^{k+1}|u|_{k+2},
\]
which completes the proof of the theorem.
\end{proof}

%\section{Error Estimates in $L^2$ Norm}

The standard duality argument is used to obtain $L^2$ error estimate.
Recall $\epsilon_h=\{\epsilon_0,\epsilon_b\}=Q_hu-u_h$.
The considered dual problem seeks $\Phi\in H_0^1(\Omega)$ satisfying
\begin{eqnarray}
-\Delta\Phi&=& \epsilon_0\quad
\mbox{in}\;\Omega.\label{dual}
%w&=&0\quad \mbox{on}\; \partial\Omega,\label{dual-BC}
\end{eqnarray}
Assume that the following $H^{2}$ regularity holds
\begin{equation}\label{reg}
\|\Phi\|_2\le C\|\epsilon_0\|.
\end{equation}

\begin{theorem} Let $u_h\in V_h$ be the weak Galerkin finite element solution of (\ref{wg}). Assume that the
exact solution $u\in H^{k+2}(\Omega)$ and (\ref{reg}) holds true.
 Then, there exists a constant $C$ such that for $k\ge 1$
\begin{equation}\label{err2}
\|Q_0u-u_0\| \le Ch^{k+2}|u|_{k+2}.
\end{equation}
\end{theorem}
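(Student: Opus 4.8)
The plan is to run the standard Aubin--Nitsche duality argument, using the second error equation \eqref{ee1} (the one expressed through the face term $\langle(\nabla u-\Q_h\nabla u)\cdot\bn, v_0-v_b\rangle_{\partial\T_h}$), since that form exposes a factor $v_0-v_b$ that can be estimated by $h_T^{1/2}$ times $\|\nabla_w v\|_T$ via \eqref{n4}. First I would test the dual problem \eqref{dual} against $\epsilon_0$, integrate by parts element-wise, and insert the projections $Q_h$ and $\Q_h$ exactly as in the derivation of \eqref{jj2}, to obtain an identity of the shape
\begin{equation*}
\|\epsilon_0\|^2 = (\nabla_w Q_h\Phi,\nabla_w\epsilon_h)_{\T_h} - \langle(\nabla\Phi-\Q_h\nabla\Phi)\cdot\bn,\ \epsilon_0-\epsilon_b\rangle_{\partial\T_h}.
\end{equation*}
Then I would substitute $v=Q_h\Phi$ into the error equation \eqref{ee1} to rewrite $(\nabla_w Q_h\Phi,\nabla_w\epsilon_h)_{\T_h}=\ell_1(u,Q_h\Phi)=\langle(\nabla u-\Q_h\nabla u)\cdot\bn,\ Q_0\Phi-Q_b\Phi\rangle_{\partial\T_h}$, so that $\|\epsilon_0\|^2$ is a sum of two boundary terms, call them $I_1$ (involving $\Phi$ and $\epsilon_h$) and $I_2$ (involving $u$ and $Q_h\Phi$).

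Next I would bound each term. For $I_2$, the key observation is that $Q_0\Phi-Q_b\Phi$ on $e\subset\partial T$ is the $L^2(e)$-projection of $Q_0\Phi-\Phi$ (since $Q_b\Phi$ is the projection of $\Phi$), and one can further subtract any constant; combined with the trace inequality \eqref{trace} and the approximation estimate $\|Q_0\Phi-\Phi\|_T+h_T\|\nabla(Q_0\Phi-\Phi)\|_T\le Ch_T^2|\Phi|_2$, this gives $\|Q_0\Phi-Q_b\Phi\|_{\partial T}\le Ch_T^{3/2}|\Phi|_{2,T}$. Similarly $\|(\nabla u-\Q_h\nabla u)\cdot\bn\|_{\partial T}\le Ch_T^{k+1/2}|u|_{k+2,T}$ by \eqref{key3}-type reasoning applied to $\Q_h$ together with the trace inequality (here I would need that $\Q_h$ is the $L^2$ projection onto $\Lambda_k$ which contains $[P_k]^d$, so it has optimal order $k+1$ approximation, and a trace inequality to pass to the face). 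Cauchy--Schwarz over faces then yields $|I_2|\le Ch^{k+2}|u|_{k+2}\,|\Phi|_2$. For $I_1$, I would estimate $\|(\nabla\Phi-\Q_h\nabla\Phi)\cdot\bn\|_{\partial T}\le Ch_T^{1/2}|\Phi|_{2,T}$ and $\|\epsilon_0-\epsilon_b\|_{\partial T}\le Ch_T^{1/2}\|\nabla_w\epsilon_h\|_T$ by \eqref{n4}; Cauchy--Schwarz plus Theorem for the energy error \eqref{err1} gives $|I_1|\le Ch\,|\Phi|_2\,\3bar\epsilon_h\3bar\le Ch^{k+2}|u|_{k+2}\,|\Phi|_2$. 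Finally, applying the regularity bound \eqref{reg}, $|\Phi|_2\le C\|\epsilon_0\|$, and dividing through by $\|\epsilon_0\|$ gives $\|\epsilon_0\|\le Ch^{k+2}|u|_{k+2}$, which is exactly \eqref{err2} since $\epsilon_0=Q_0u-u_0$.

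The step I expect to be the main obstacle is the sharp estimate of the boundary term $\|(\nabla\phi-\Q_h\nabla\phi)\cdot\bn\|_{\partial T}$ for a generic smooth $\phi$: because $\Q_h$ projects onto $\Lambda_k(T)$, which is only a \emph{piecewise} Raviart--Thomas space on the sub-triangulation of $T$ with a continuity/one-piece-normal-trace constraint, rather than a standard polynomial space, I cannot directly quote a textbook interpolation estimate. I would handle this by comparing $\Q_h\nabla\phi$ with $\Pi_h\nabla\phi$ (whose error is controlled by \eqref{key3}) or, more simply, by using that $[P_k(T)]^d\subset\Lambda_k(T)$ so that $\|\nabla\phi-\Q_h\nabla\phi\|_T\le\|\nabla\phi-p_{k,T}\|_T\le Ch_T^{k+1}|\nabla\phi|_{k+1,T}$ on each element, and then invoking the trace inequality \eqref{trace} (together with an inverse inequality on the finite-dimensional space $\Lambda_k$, whose constant is uniform under the shape-regularity assumption on the sub-triangles used in the previous lemma) to move the bound to $\partial T$ with the loss of a single power $h_T^{1/2}$. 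A secondary point requiring care is the case distinction: the hypothesis $k\ge1$ is needed so that $Q_0\Phi-Q_b\Phi$ picks up the full $h^{3/2}$ (for $k=0$ the interior space would be too small), and I would make sure the counting of powers of $h$ in $I_1$ and $I_2$ both land on $h^{k+2}$ only under this assumption.
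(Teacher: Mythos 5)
Your proposal is correct and follows essentially the same route as the paper: the duality argument with the dual problem \eqref{dual}, the identity $\|\epsilon_0\|^2=\ell_1(u,Q_h\Phi)-\ell_1(\Phi,\epsilon_h)$ obtained by combining \eqref{jw.08}, \eqref{j1} and \eqref{ee1}, and then the same bounds via the trace inequality, the approximation properties of $Q_h$ and $\Q_h$, the norm equivalence \eqref{happy}/\eqref{n4}, the energy estimate \eqref{err1}, and the regularity \eqref{reg}. Your added remarks on estimating $\|(\nabla\Phi-\Q_h\nabla\Phi)\cdot\bn\|_{\partial T}$ via $[P_k(T)]^d\subset\Lambda_k(T)$ and on why $k\ge 1$ is needed are consistent with (and slightly more explicit than) what the paper does.
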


\begin{proof}
Testing (\ref{dual}) by $\epsilon_0$ and using the fact that $\sum_{T\in\T_h}\langle \nabla
\Phi\cdot\bn, \epsilon_b\rangle_\pT=0$ give
\begin{eqnarray}\nonumber
\|\epsilon_0\|^2&=&-(\Delta\Phi,\epsilon_0)\\
&=&(\nabla \Phi,\ \nabla \epsilon_0)_{\T_h}-\l
\nabla\Phi\cdot\bn,\ \epsilon_0- \epsilon_b\r_{\pT_h}.\label{jw.08}
\end{eqnarray}
Setting $u=\Phi$ and $v=\epsilon_h$ in (\ref{j1})  yields
\begin{eqnarray}
(\nabla\Phi,\;\nabla \epsilon_0)_{\T_h}=(\nabla_w Q_h\Phi,\;\nabla_w \epsilon_h)_{\T_h}+\l
\Q_h\nabla\Phi\cdot\bn,\ \epsilon_0-\epsilon_b\r_{\pT_h}.\label{j1-new}
\end{eqnarray}
Substituting (\ref{j1-new}) into (\ref{jw.08}) and using (\ref{ee1}) yield
\begin{eqnarray}
\|\epsilon_0\|^2&=&(\nabla_w \epsilon_h,\ \nabla_w Q_h\Phi)_{\T_h}-\l
(\nabla\Phi-\Q_h\nabla\Phi)\cdot\bn,\ \epsilon_0-\epsilon_b\r_{\pT_h}\nonumber\\
&=&\ell_1(u,Q_h\Phi)-\ell_1(\Phi, \epsilon_h).\label{m2}
\end{eqnarray}
Next we estimate the two terms on the right hand side of (\ref{m2}). Using the Cauchy-Schwarz inequality, the trace inequality (\ref{trace}) and the definitions of $Q_h$ and $\Pi_h$
we obtain
\begin{eqnarray*}
|\ell_1(u,Q_h\Phi)|&\le&\left| \langle (\nabla u-\Q_h\nabla
u)\cdot\bn,\;
Q_0\Phi-Q_b\Phi\rangle_{\pT_h} \right|\\
&\le& \left(\sum_{T\in\T_h}\|\nabla u-\Q_h\nabla
u\|^2_\pT\right)^{1/2}
\left(\sum_{T\in\T_h}\|Q_0\Phi-Q_b\Phi\|^2_\pT\right)^{1/2}\nonumber \\
&\le& C\left(\sum_{T\in\T_h}h\|\nabla u-\Q_h\nabla
u\|^2_\pT\right)^{1/2}
\left(\sum_{T\in\T_h}h^{-1}\|Q_0\Phi-\Phi\|^2_\pT\right)^{1/2} \nonumber\\
&\le&  Ch^{k+2}|u|_{k+2}|\Phi|_2.\nonumber
\end{eqnarray*}

Using the Cauchy-Schwarz inequality, the trace inequality (\ref{trace}), (\ref{happy}) and (\ref{err1}), we have
\begin{eqnarray*}
|\ell_1(\Phi,\epsilon_h)|&=&\left|\sum_{T\in\T_h}\langle (\nabla \Phi-\Q_h\nabla
\Phi)\cdot\bn, \epsilon_0-\epsilon_b\rangle_\pT\right|\\
&\le & C \sum_{T\in\T_h}\|\nabla \Phi-\Q_h\nabla \Phi\|_{\pT}
\|\epsilon_0-\epsilon_b\|_\pT\nonumber\\
&\le & C \left(\sum_{T\in\T_h}h_T\|\nabla \Phi-\Q_h\nabla \Phi\|_{\pT}^2\right)^{\frac12}
\left(\sum_{T\in\T_h}h_T^{-1}\|\epsilon_0-\epsilon_b\|_\pT^2\right)^{\frac12}\\
&\le & Ch^{k+2}|u|_{k+2}|\Phi|_{2}.
\end{eqnarray*}
Combining the two estimates above with (\ref{m2}) yields
$$
\|\epsilon_0\|^2 \leq C h^{k+2}|u|_{k+2} \|\Phi\|_2.
$$
It follows from the above inequality and
the regularity assumption (\ref{reg}),
 $$
\|\epsilon_0\|\leq C h^{k+2}|u|_{k+2}.
$$
We have completed the proof.
\end{proof}

\section{Numerical Experiments}\label{Section:numerical-experiments}

We solve the Poisson problem \eqref{pde}-\eqref{bc} on the unit square domain with the exact
  solution
\an{ \label{s-1} u=\sin(\pi x)\sin(\pi y).
  } We first use the uniform square grids shown in Figure \ref{g-square}.
We then compute the problem with several
We list the computational results in Table \ref{t0}.
As proved,   we have one order of super-convergence for both
   $L^2$  errors and $H^1$-like errors.

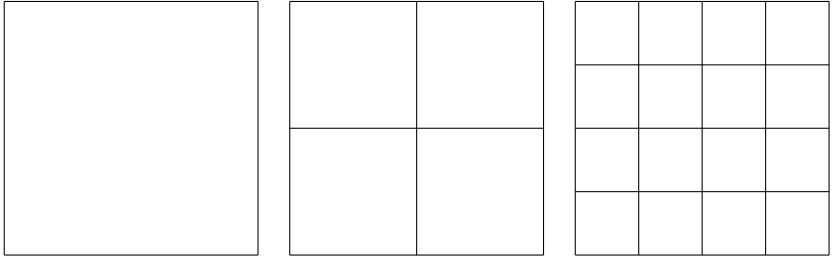
\begin{figure}[h!]
\begin{center}
 \setlength\unitlength{1.2pt}
    \begin{picture}(260,84)(0,2)
    \put(0,0){\begin{picture}(110,110)(0,0)
       \multiput(0,0)(80,0){2}{\line(0,1){80}}  \multiput(0,0)(0,80){2}{\line(1,0){80}}
      \end{picture}}
    \put(90,0){\begin{picture}(110,110)(0,0)
       \multiput(0,0)(40,0){3}{\line(0,1){80}}  \multiput(0,0)(0,40){3}{\line(1,0){80}}
      \end{picture}}
    \put(180,0){\begin{picture}(110,110)(0,0)
       \multiput(0,0)(20,0){5}{\line(0,1){80}}  \multiput(0,0)(0,20){5}{\line(1,0){80}}
      \end{picture}}
    \end{picture}
    \end{center}
\caption{  The first three levels of wedge grids used in Table \ref{t3}. }
\label{g-square}
\end{figure}

\begin{table}[h!]
  \centering   \renewcommand{\arraystretch}{1.05}
  \caption{ Error profiles and convergence rates on square grids shown in Figure \ref{g-square} for \eqref{s-1}. }
\label{t0}
\begin{tabular}{c|cc|cc}
\hline
level & $\|Q_h u-  u_h \|_0 $  &rate &  $\3bar Q_h u- u_h \3bar $ &rate   \\
\hline
 &\multicolumn{4}{c}{by the $P_0$-$P_0$($\Lambda_0$) WG element} \\ \hline
 6&   0.1101E-02 &  1.99&   0.1988E+00 &  0.99\\
 7&   0.2756E-03 &  2.00&   0.9951E-01 &  1.00\\
 8&   0.6892E-04 &  2.00&   0.4977E-01 &  1.00\\
\hline
 &\multicolumn{4}{c}{by the $P_1$-$P_1$($\Lambda_1$) WG element} \\ \hline
 6&   0.2722E-04 &  2.99&   0.6952E-02 &  2.00\\
 7&   0.3407E-05 &  3.00&   0.1739E-02 &  2.00\\
 8&   0.4261E-06 &  3.00&   0.4347E-03 &  2.00\\
 \hline
 &\multicolumn{4}{c}{by the $P_2$-$P_2$($\Lambda_2$) WG element} \\ \hline
 6&   0.8248E-06 &  4.00&   0.3106E-03 &  3.00\\
 7&   0.5156E-07 &  4.00&   0.3884E-04 &  3.00\\
 8&   0.3313E-08 &  3.96&   0.4855E-05 &  3.00\\
\hline
 &\multicolumn{4}{c}{by the $P_3$-$P_3$($\Lambda_3$) WG element} \\ \hline
 5&   0.6585E-06 &  4.99&   0.1674E-03 &  3.99\\
 6&   0.2060E-07 &  5.00&   0.1047E-04 &  4.00\\
 7&   0.6700E-09 &  4.94&   0.6548E-06 &  4.00\\
 \hline
 &\multicolumn{4}{c}{by the $P_4$-$P_4$($\Lambda_4$) WG element} \\ \hline
 3&   0.1110E-03 &  5.93&   0.9355E-02 &  4.93\\
 4&   0.1752E-05 &  5.99&   0.2957E-03 &  4.98\\
 5&   0.2765E-07 &  5.99&   0.9266E-05 &  5.00\\
 \hline
\end{tabular}%
\end{table}%

We compute the solution \eqref{s-1} again on a type of quadrilateral grids, shown
   in Figure \ref{g-4}.
Here to avoid convergence to parallelograms under the nest refinement of quadrilaterals,
  we fix the shape of quadrilaterals on all levels of grids.
We list the computation in Table \ref{t1}.
Again, the data confirm the theoretic convergence rates.

\begin{figure}[htb]\begin{center}
\includegraphics[width=1.4in]{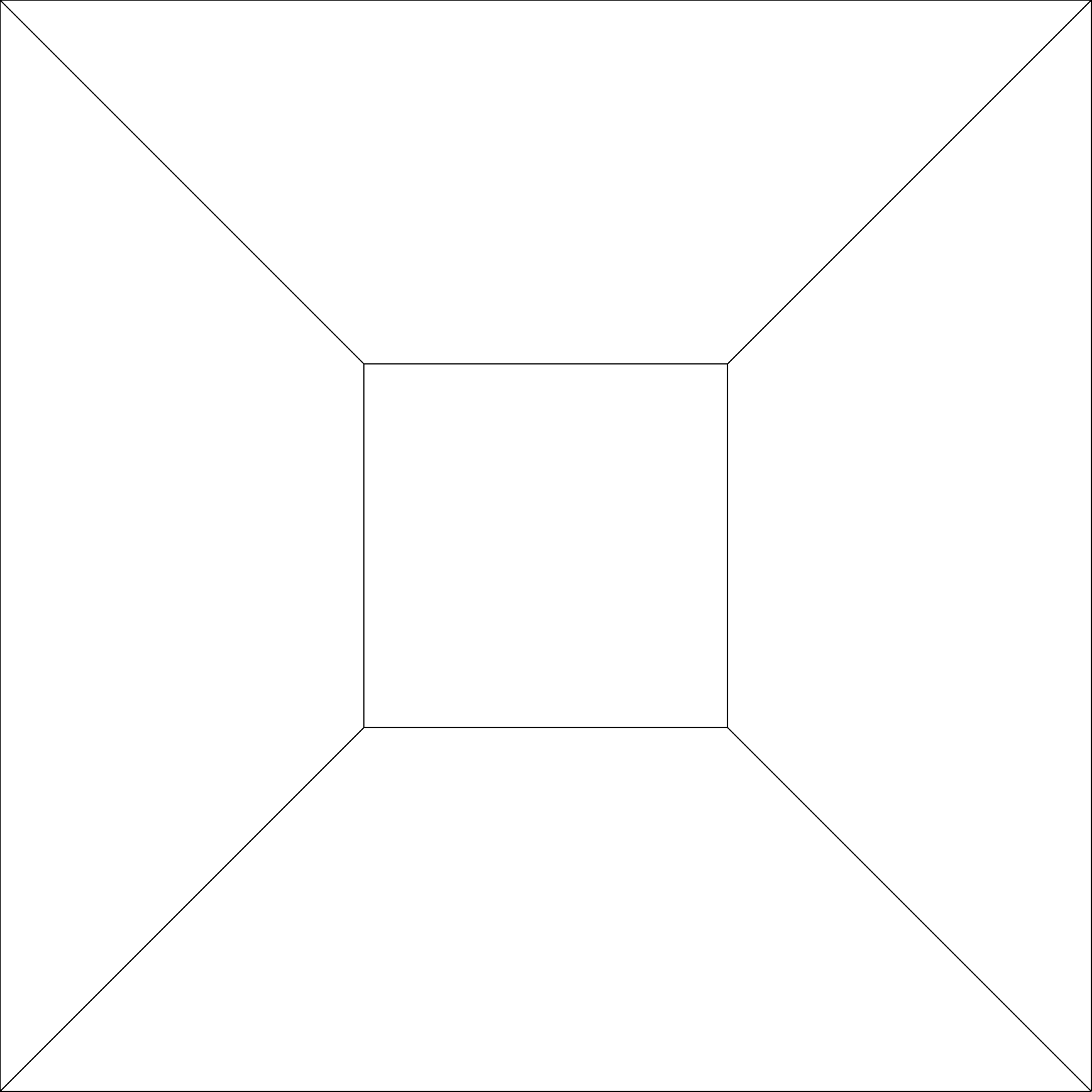} \
\includegraphics[width=1.4in]{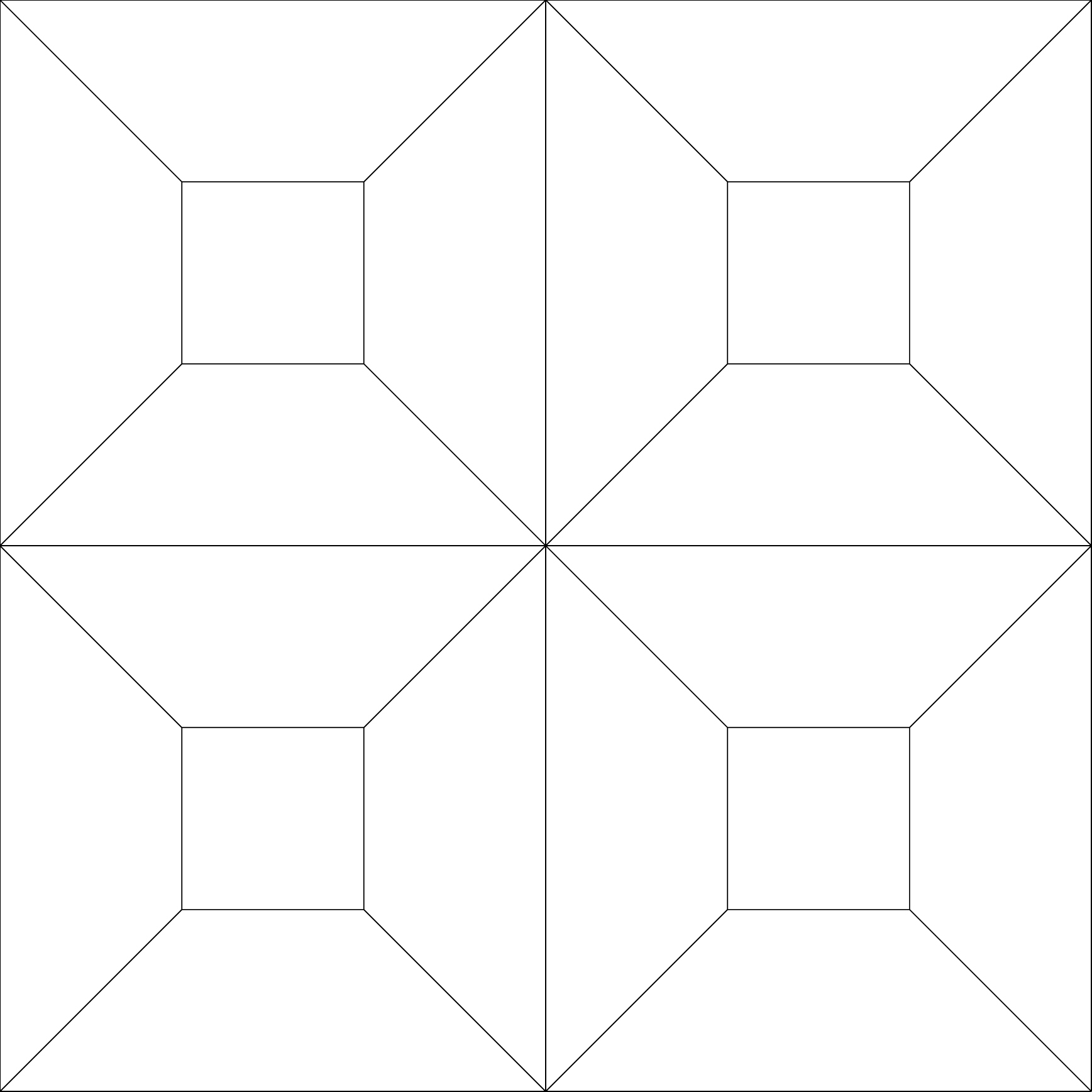} \
\includegraphics[width=1.4in]{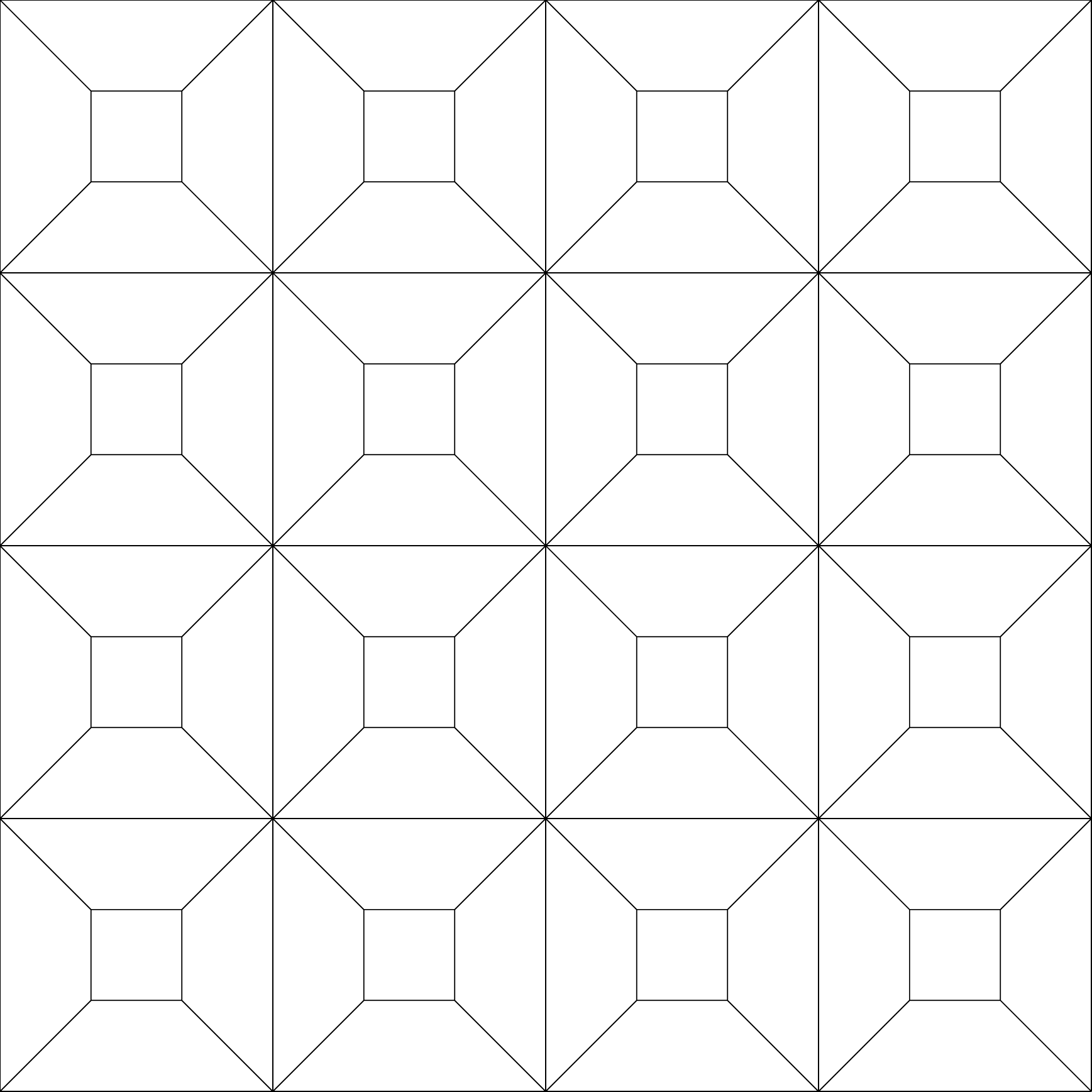}

\caption{The first three levels of grids, for Table \ref{t1}.  }
\label{g-4}
\end{center}
\end{figure}

\begin{table}[h!]
  \centering   \renewcommand{\arraystretch}{1.05}
  \caption{ Error profiles and convergence rates on quadrilateral
      grids shown in Figure \ref{g-4} for \eqref{s-1}. }
\label{t1}
\begin{tabular}{c|cc|cc}
\hline
level & $\|Q_h u-  u_h \|_0 $  &rate &  $\3bar Q_h u- u_h \3bar $ &rate   \\
\hline
 &\multicolumn{4}{c}{by the $P_0$-$P_0$($\Lambda_0$) WG element} \\ \hline
 6&   0.2491E-03 &  2.00&   0.9033E-01 &  1.00 \\
 7&   0.6231E-04 &  2.00&   0.4518E-01 &  1.00 \\
 8&   0.1558E-04 &  2.00&   0.2259E-01 &  1.00 \\
\hline
 &\multicolumn{4}{c}{by the $P_1$-$P_1$($\Lambda_1$) WG element} \\ \hline
 6&   0.2722E-04 &  2.99&   0.6952E-02 &  2.00 \\
 7&   0.3407E-05 &  3.00&   0.1739E-02 &  2.00 \\
 8&   0.4261E-06 &  3.00&   0.4347E-03 &  2.00 \\
 \hline
 &\multicolumn{4}{c}{by the $P_2$-$P_2$($\Lambda_2$) WG element} \\ \hline
 5&   0.9093E-06 &  4.00&   0.3256E-03 &  3.00 \\
 6&   0.5686E-07 &  4.00&   0.4071E-04 &  3.00 \\
 7&   0.3554E-08 &  4.00&   0.5090E-05 &  3.00 \\
\hline
 &\multicolumn{4}{c}{by the $P_3$-$P_3$($\Lambda_3$) WG element} \\ \hline
 2&   0.7967E-03 &  5.22&   0.4984E-01 &  4.34 \\
 3&   0.2629E-04 &  4.92&   0.3181E-02 &  3.97 \\
 4&   0.8342E-06 &  4.98&   0.1998E-03 &  3.99 \\
 5&   0.2618E-07 &  4.99&   0.1251E-04 &  4.00 \\
 \hline
\end{tabular}%
\end{table}%

\begin{figure}[htb]\begin{center}
\includegraphics[width=1.4in]{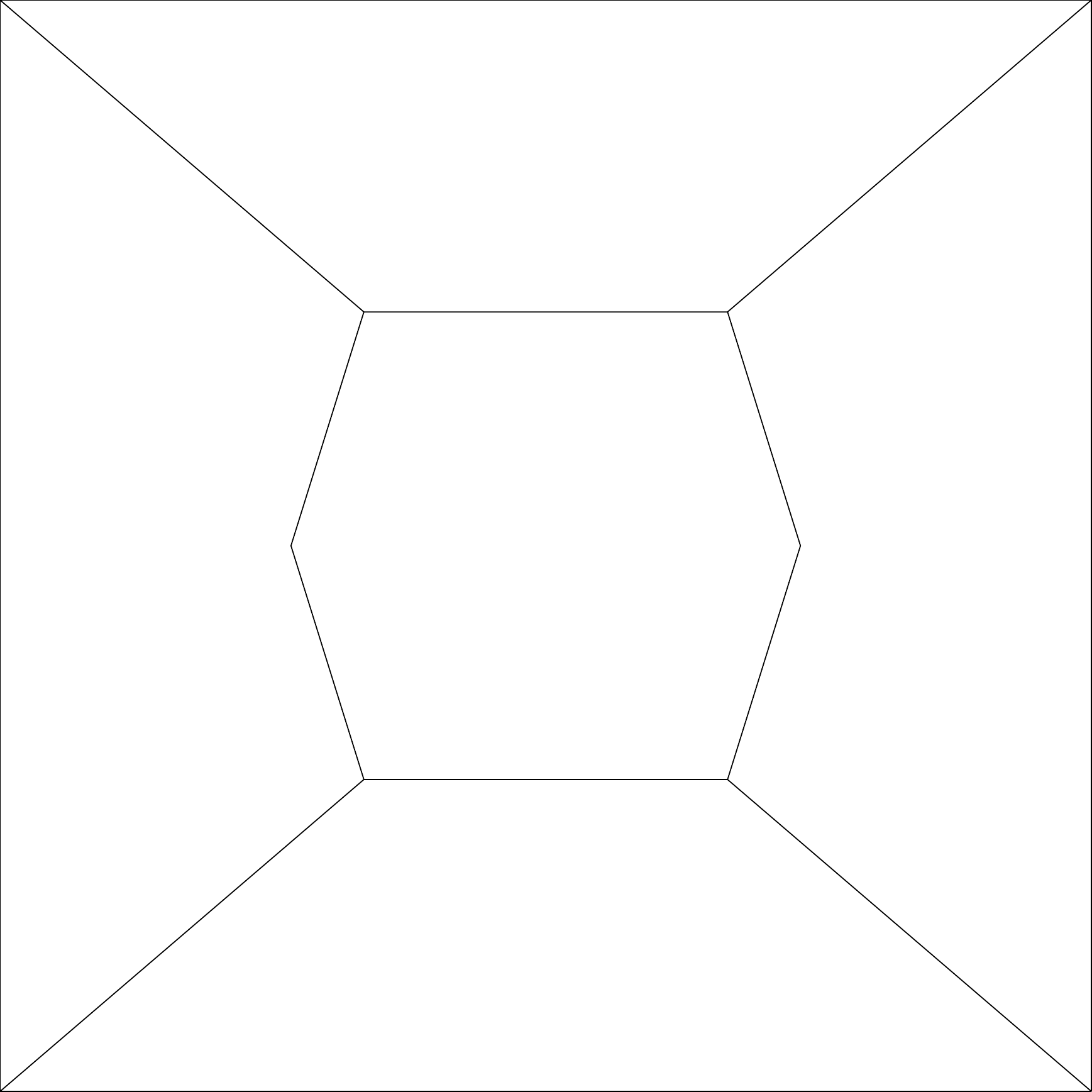} \
\includegraphics[width=1.4in]{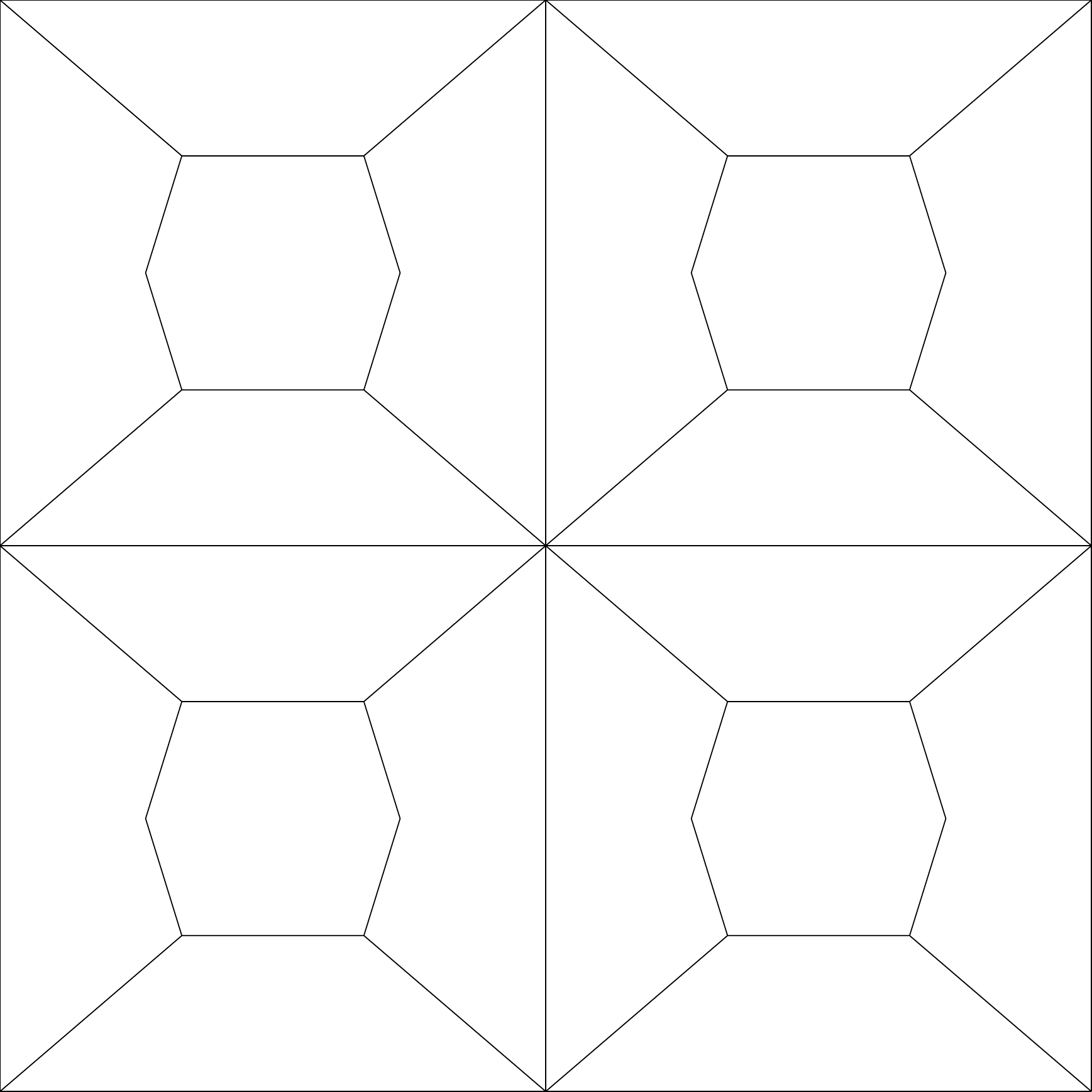} \
\includegraphics[width=1.4in]{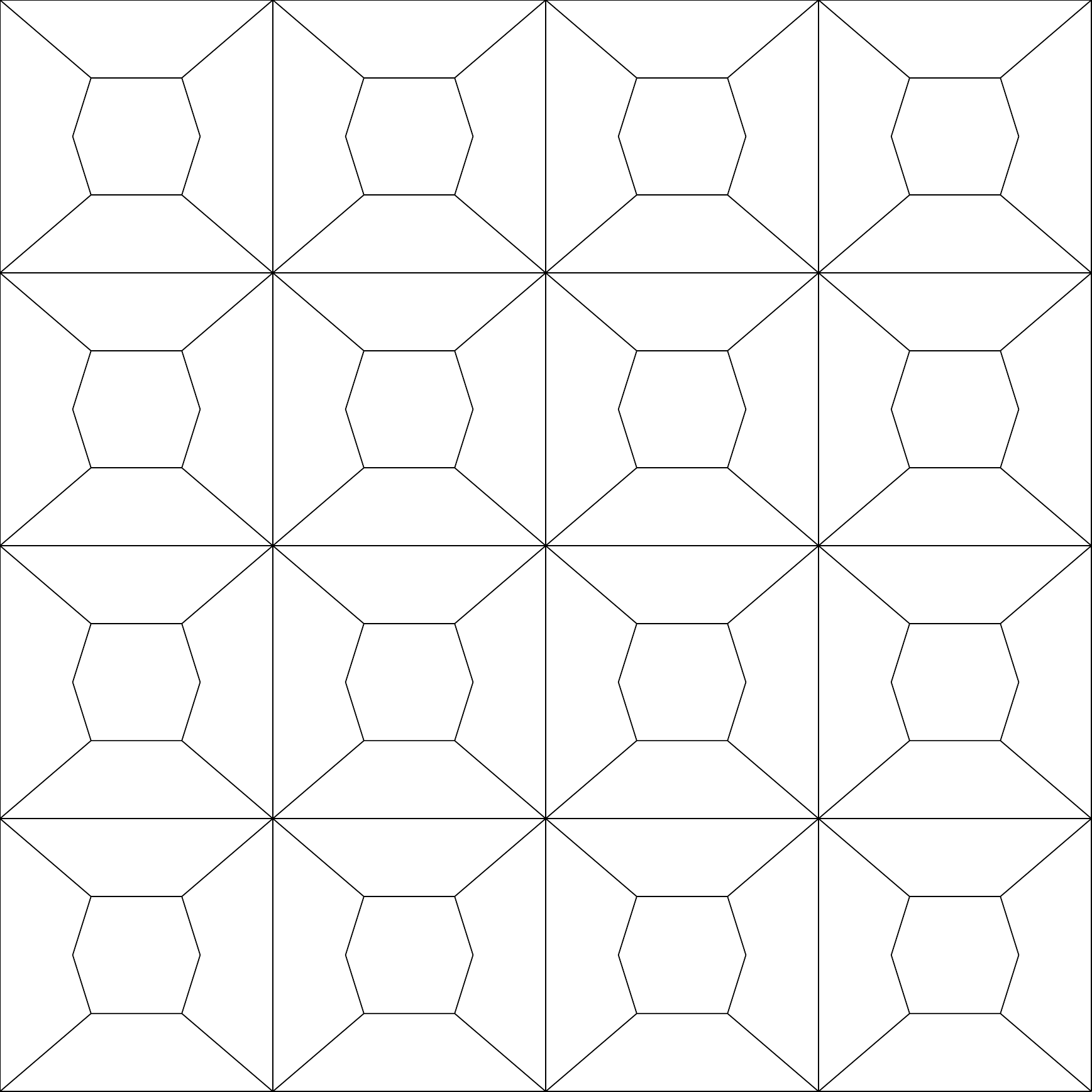}

\caption{The first three levels of quadrilateral-hexagon grids, for Table \ref{t2}.  }
\label{g-6}
\end{center}
\end{figure}

Next we solve the same problem \eqref{s-1} on a type of grids with quadrilaterals and hexagons,
 shown in Figure \ref{g-6}.
We list the result of computation in Table \ref{t2} where we obtain
   one order of superconvergence in all cases.

\begin{table}[h!]
  \centering   \renewcommand{\arraystretch}{1.05}
  \caption{ Error profiles and convergence rates on polygonal grids shown in Figure \ref{g-6} for \eqref{s-1}. }
\label{t2}
\begin{tabular}{c|cc|cc}
\hline
level & $\|Q_h u-  u_h \|_0 $  &rate &  $\3bar Q_h u- u_h \3bar $ &rate   \\
\hline
 &\multicolumn{4}{c}{by the $P_0$-$P_0$($\Lambda_0$) WG element} \\ \hline
 6&   0.1892E-03 &  2.00&   0.8731E-01 &  1.00  \\
 7&   0.4731E-04 &  2.00&   0.4367E-01 &  1.00  \\
 8&   0.1183E-04 &  2.00&   0.2184E-01 &  1.00  \\
\hline
 &\multicolumn{4}{c}{by the $P_1$-$P_1$($\Lambda_1$) WG element} \\ \hline
 6&   0.3602E-05 &  3.00&   0.1791E-02 &  2.00 \\
 7&   0.4504E-06 &  3.00&   0.4477E-03 &  2.00 \\
 8&   0.5631E-07 &  3.00&   0.1119E-03 &  2.00 \\
 \hline
 &\multicolumn{4}{c}{by the $P_2$-$P_2$($\Lambda_2$) WG element} \\ \hline
 6&   0.1850E-07 &  4.00&   0.1655E-04 &  3.00  \\
 7&   0.1156E-08 &  4.00&   0.2068E-05 &  3.00  \\
 8&   0.7299E-10 &  3.99&   0.2586E-06 &  3.00  \\
\hline
 &\multicolumn{4}{c}{by the $P_3$-$P_3$($\Lambda_3$) WG element} \\ \hline
 5&   0.7478E-08 &  5.00&   0.4103E-05 &  4.00  \\
 6&   0.2339E-09 &  5.00&   0.2565E-06 &  4.00  \\
 7&   0.7941E-11 &  4.88&   0.1603E-07 &  4.00 \\
 \hline
\end{tabular}%
\end{table}%

\begin{figure}[h!]
\begin{center}
 \setlength\unitlength{1pt}
    \begin{picture}(320,118)(0,3)
    \put(0,0){\begin{picture}(110,110)(0,0)
       \multiput(0,0)(80,0){2}{\line(0,1){80}}  \multiput(0,0)(0,80){2}{\line(1,0){80}}
       \multiput(0,80)(80,0){2}{\line(1,1){20}} \multiput(0,80)(20,20){2}{\line(1,0){80}}
       \multiput(80,0)(0,80){2}{\line(1,1){20}}  \multiput(80,0)(20,20){2}{\line(0,1){80}}
    \put(80,0){\line(-1,1){80}}% \put(80,0){\line(1,5){20}}\put(80,80){\line(-3,1){60}}
      \end{picture}}
    \put(110,0){\begin{picture}(110,110)(0,0)
       \multiput(0,0)(40,0){3}{\line(0,1){80}}  \multiput(0,0)(0,40){3}{\line(1,0){80}}
       \multiput(0,80)(40,0){3}{\line(1,1){20}} \multiput(0,80)(10,10){3}{\line(1,0){80}}
       \multiput(80,0)(0,40){3}{\line(1,1){20}}  \multiput(80,0)(10,10){3}{\line(0,1){80}}
    \put(80,0){\line(-1,1){80}}% \put(80,0){\line(1,5){20}}\put(80,80){\line(-3,1){60}}
       \multiput(40,0)(40,40){2}{\line(-1,1){40}}
       %  \multiput(80,40)(10,-30){2}{\line(1,5){10}}
       %  \multiput(40,80)(50,10){2}{\line(-3,1){30}}
      \end{picture}}
    \put(220,0){\begin{picture}(110,110)(0,0)
       \multiput(0,0)(20,0){5}{\line(0,1){80}}  \multiput(0,0)(0,20){5}{\line(1,0){80}}
       \multiput(0,80)(20,0){5}{\line(1,1){20}} \multiput(0,80)(5,5){5}{\line(1,0){80}}
       \multiput(80,0)(0,20){5}{\line(1,1){20}}  \multiput(80,0)(5,5){5}{\line(0,1){80}}
    \put(80,0){\line(-1,1){80}}% \put(80,0){\line(1,5){20}}\put(80,80){\line(-3,1){60}}
       \multiput(40,0)(40,40){2}{\line(-1,1){40}}
       %  \multiput(80,40)(10,-30){2}{\line(1,5){10}}
       %  \multiput(40,80)(50,10){2}{\line(-3,1){30}}

       \multiput(20,0)(60,60){2}{\line(-1,1){20}}   \multiput(60,0)(20,20){2}{\line(-1,1){60}}
       %  \multiput(80,60)(15,-45){2}{\line(1,5){5}} \multiput(80,20)(5,-15){2}{\line(1,5){15}}
       %  \multiput(20,80)(75,15){2}{\line(-3,1){15}}\multiput(60,80)(25,5){2}{\line(-3,1){45}}
      \end{picture}}

    \end{picture}
    \end{center}
\caption{  The first three levels of wedge grids used in Table \ref{t3}. }
\label{grid3}
\end{figure}
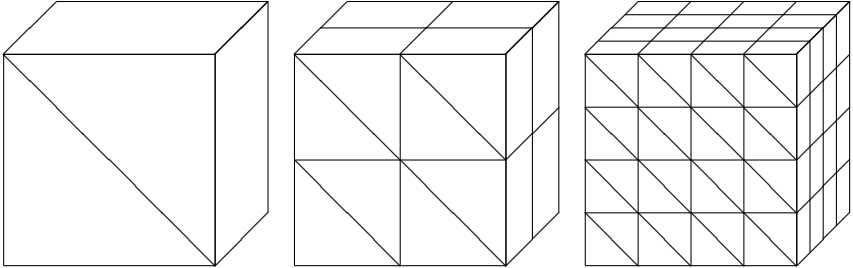

Lastly, we solve a 3D problem \eqref{pde}--\eqref{bc} on the unit cube domain
  $\Omega=(0,1)^3$ with the exact
  solution
\an{ \label{s-2}  u&=\sin(\pi x) \sin(\pi y)\sin(\pi z). }

 Here we use a uniform wedge-type (polyhedron with 2 triangle faces and 3 rectangle faces)
   grids,  shown in Figure \ref{grid3}.   Here each wedge is subdivided in to three
   tetrahedrons with three rectangular faces being cut in to two triangles,
    when defining piecewise $RT_k$
  weak gradient space $\Lambda_k$.
 The results are listed in Table \ref{t3},  confirming the one order superconvergence in
  the two norms for all polynomial-degree $k$ elements.

\begin{table}[h!]
  \centering   \renewcommand{\arraystretch}{1.05}
  \caption{ Error profiles and convergence rates on grids shown in Figure \ref{grid3} for \eqref{s-2}. }
\label{t3}
\begin{tabular}{c|cc|cc}
\hline

level & $\|Q_h u-  u_h \|_0 $  &rate &  $\3bar Q_h u- u_h \3bar $ &rate   \\
\hline
 &\multicolumn{4}{c}{by the $P_0$-$P_0$($\Lambda_0$) WG element} \\ \hline
 5&     0.0010162&2.0&     0.1241692&1.0 \\
 6&     0.0002548&2.0&     0.0621798&1.0 \\
 7&     0.0000637&2.0&     0.0311019&1.0 \\
\hline
 &\multicolumn{4}{c}{by the $P_1$-$P_1$($\Lambda_1$) WG element} \\ \hline
 4&     0.0011585&2.9&     0.1242568&2.0 \\
 5&     0.0001470&3.0&     0.0312093&2.0 \\
 6&     0.0000184&3.0&     0.0078116&2.0 \\
 \hline
 &\multicolumn{4}{c}{by the $P_2$-$P_2$($\Lambda_0$) WG element} \\ \hline
 4&     0.0001439&4.0&     0.0284085&3.0 \\
 5&     0.0000090&4.0&     0.0035641&3.0 \\
 6&     0.0000006&4.0&     0.0004459&3.0 \\
\hline
 &\multicolumn{4}{c}{by the $P_3$-$P_3$($\Lambda_0$) WG element} \\ \hline
 3&     0.0004607&4.9&     0.0770284&3.9 \\
 4&     0.0000148&5.0&     0.0048815&4.0 \\
 5&     0.0000005&5.0&     0.0003062&4.0 \\
 \hline
\end{tabular}%
\end{table}%

\end{document}